\documentclass[11pt
]{amsart}
\usepackage{latexsym}
\usepackage{amssymb, amsmath}
\usepackage{amsthm}
\usepackage{geometry}
\usepackage{hyperref}
\usepackage{changebar}
\usepackage{comment}
\usepackage[T1]{fontenc}
\usepackage[stretch=10]{microtype}
\usepackage{tikz}
\usetikzlibrary{calc}
\usetikzlibrary{intersections}
\usepackage{fixltx2e}
\usepackage{wrapfig}

\newcommand{\AAA}{\mathbb{A}}

\newcommand{\CC}{\mathcal{C}}

\newcommand{\LL}{\mathcal{L}}

\newcommand{\MM}{\mathcal{M}}
\newcommand{\NN}{\mathcal{N}}

\def\SS{\mathcal{S}}

\newcommand{\id}{{\rm id}}

\newcommand{\real}{\mathbb{R}}


\newtheorem{proposition}{Proposition}[section]

\newtheorem{theorem}[proposition]{Theorem}

\theoremstyle{remark}
\newtheorem{remark}[proposition]{Remark}
\theoremstyle{definition}

\numberwithin{equation}{section}

\hfuzz=5pt
\vfuzz=5pt

\begin{document}

\title{Linear response for intermittent maps}
\author{Viviane Baladi
\and Mike Todd
}
\address{D.M.A., UMR 8553, \'Ecole Normale Sup\'erieure,  75005 Paris, France}
\curraddr{Sorbonne Universit\'es, UPMC Univ Paris 06, CNRS, Institut de Math\'ematiques de Jussieu-Paris Rive Gauche (IMJ-PRG), Analyse Alg\'ebrique, 4, Place Jussieu, 75005 Paris, France}

\email{viviane.baladi@imj-prg.fr}

\address{Mathematical Institute, University of St Andrews, 
North Haugh,
St Andrews, 
KY16 9SS, Scotland}
\email{m.todd@st-andrews.ac.uk}

\date{}
\begin{abstract}We consider the one parameter family $\alpha \mapsto T_\alpha$
($\alpha \in [0,1)$)
of Pomeau-Manneville type interval maps
$T_\alpha(x)=x(1+2^\alpha x^\alpha)$ for  $x \in [0,1/2)$
and $T_\alpha(x)=2x-1$ for $x \in [1/2, 1]$,  with the associated
absolutely continuous invariant probability measure
$\mu_\alpha$. For $\alpha \in (0,1)$, Sarig and Gou\"ezel proved that  the system
mixes only polynomially with rate $n^{1-1/\alpha}$ (in particular, there
is no spectral gap). We show that for any $\psi\in L^q$, 
the map $\alpha \to \int_0^1 \psi\, d\mu_\alpha$
is differentiable on $[0,1-1/q)$, and we give a (linear response) formula
for the value of the derivative.  This is the first time that a linear response formula
for the SRB measure
is obtained in the setting of slowly mixing dynamics. Our argument shows how cone techniques can
be used in this context. For $\alpha \ge 1/2$ we need the $n^{-1/\alpha}$ decorrelation obtained
by Gou\"ezel under additional conditions. 
\end{abstract}
\thanks{This work was started in 2014 during a visit of MT to DMA-ENS, continued during
a visit of VB to St Andrews in 2015, and finished during a stay
of VB in the Centre for Mathematical Sciences in Lund. We are grateful to these institutions for their
hospitality, and we thank I. Melbourne for pointing out reference \cite{Tha00} and
A. Korepanov for inciting us to sharpen our results.
VB thanks  H.~Bruin for explanations on \cite{BT} and T. Persson for a conversation
on $L^q$. 
She is much indebted to S. Gou\"ezel for  pointing out Theorem 2.4.14 in \cite{Goth}, which
allowed us to extend our results to $\alpha \in[1/2,1)$.}
\maketitle

\section{Introduction}

Given a family of dynamical systems $T_\alpha$ on
a Riemann manifold, depending smoothly
on a real parameter $\alpha$, and admitting (at least for some large subset
of parameters) an ergodic physical (e.g. absolutely continuous, or SRB) 
invariant measure $\mu_\alpha$, it is natural to ask how smooth
is the dependence of $\mu_\alpha$ on the parameter
$\alpha$. In particular, one would like to know whether $\alpha \mapsto
\mu_\alpha$ is differentiable and, if possible, compute a formula
for the derivative, depending on $\mu_\alpha$, $T_\alpha$,
and $v_\alpha=\partial_\alpha T_\alpha$. 

This theme of {\it  linear response} was explored
in a few pioneering papers \cite{Ru1,KKPW,Ru} in the setting of smooth hyperbolic
dynamics (Anosov or Axiom~A), and then further developed, following the
influence of ideas of David Ruelle. In the smooth hyperbolic case, the SRB
measure $\mu_\alpha$ corresponds to the fixed point of a transfer
operator $\LL_\alpha$ enjoying a spectral gap on
a suitable Banach space. In particular, this fixed point is
a simple isolated eigenvalue in the spectrum of $\LL_\alpha$, and
linear response can be viewed as an instance of perturbation theory
for simple eigenvalues. This is evident in the linear response
formulas, which all involve some avatar of the
resolvent $(\id-\LL_\alpha)^{-1}=\sum_k \LL_\alpha^k$ applied to a suitable vector $Y_\alpha$, depending
on the derivative of $\mu_\alpha$ and on $v_\alpha$.

It was soon realised that
existence of a spectral gap is not sufficient to
guarantee linear response when bifurcations
are present (see e.g. \cite{Ma, B1, BS}). In the other direction, neither the
spectral gap nor structural stability 
is necessary for linear response, as was shown by Dolgopyat \cite{Do}
who obtained a linear response formula for some rapidly mixing
systems (which were not all exponentially mixing or
structurally stable).

The intuition that
a key sufficient condition is convergence of the
sum $\sum_k \LL_\alpha^k(Y_\alpha)$ 
was confirmed by  \cite[Remark 2.4]{HM}. This is
of course related to a summable decay of correlations. However,
decay of correlation usually only holds for observables with a suitable
modulus of continuity, which $Y_\alpha$, being a derivative, does not always
enjoy.
We confirm this intuition by  studying a toy-model, of Pomeau-Manneville type:
\footnote{See Remark~ \ref{amazing}  for one possible generalisation.}
For $\alpha\in [0,1)$, we consider the maps (as in \cite{LSV}) 
$T_\alpha: [0,1]\to [0,1]$:
$$
T_\alpha(x)=\begin{cases}
x(1+2^\alpha x^\alpha)\,  ,& x \in [0,1/2)\\
2x-1\, , & x \in [1/2, 1]\, .
\end{cases}
$$
(Of course, $T_0$ is just the angle-doubling map $T_0(x)=2 x$ modulo $1$.)
It is well-known that  each such $T_\alpha$ admits a unique absolutely continuous invariant
probability measure $\mu_\alpha=\rho_\alpha\, dx$.
(Clearly, $\rho_0(x)\equiv 1$.)
Statistical stability (continuity) of $\mu_\alpha$ when $\alpha$ changes is proved
in \cite{FT}. 
The absolutely continuous invariant
probability measure $\mu_\alpha=\rho_\alpha dx$ is mixing for all
$\alpha \in [0,1)$. For $\alpha=0$ the mixing rate for Lipschitz observables,
say, is exponential (decaying like $1/2^k$). For $\alpha \in (0,1)$ the mixing rate
is only polynomial with rate
$n^{1-1/\alpha}$ \cite{Go, Sa}. (In fact, Gou\"ezel obtains a faster rate
$n^{-1/\alpha}$ for $\int (\psi\circ T_\alpha^n) \phi\, d\mu_\alpha$,
if $\psi$ is bounded, $\phi$ is Lipschitz and vanishes in a neighbourhood
of zero, and $\int \phi\, d\mu_\alpha=0$, and  this property is crucial below when $\alpha \ge 1/2$.) In particular, for any $\alpha\in (0,1)$,
the density
$\rho_\alpha$ cannot be the fixed point of a transfer operator with a spectral gap on
a Banach space containing all $\CC^\infty$ functions.
However, we are able to prove (Theorem ~\ref{main}) that for any $q\in[1,\infty]$ and any 
$\psi\in L^q$, the map
$
\alpha \mapsto \int \psi \, d\mu_\alpha
$
is continuously differentiable on $[0,1-1/q)$, and we give two expressions (\eqref{LRF}, with
a resolvent, \eqref{susc}, of susceptibility function type) for the linear
response formula, with $Y_\alpha=(X_\alpha \NN_\alpha(\rho_\alpha))'$, where
$X_\alpha = v_\alpha \circ (T_\alpha|_{[0,1/2)}^{-1})$ and $\NN_\alpha$ corresponds to the first
branch of the transfer operator  $\LL_\alpha$.
This is the first time that a linear response formula
is achieved for a slowly mixing dynamics. The fact that linear response holds for any bounded $\psi$
is relevant since nonsmooth observables appear naturally.  For example, 
if $A$ is smooth and $\Theta$ is a Heaviside function,
the expectation value
of $\Theta(A(x))$ gives the fraction of the total measure where $A$ has
positive value, and more generally such discontinuous observables
 have probabilistic and physical interpretations, with the work of Lucarini et al. \cite{Luca, Luca2} 
showing how the theory of extremes for dynamical systems (in particular
 regarding climate change)  can
be cast in this framework.

Our proof is based on the cone techniques
from \cite{LSV}, and hinges on the new observation that the factor $X_\alpha$, respectively
$X_\alpha'$, compensates the singularity at zero of $\rho_\alpha'$, respectively $\rho_\alpha$. 
Indeed, the compensation is drastic enough so that the $n^{-1/\alpha}$ decorrelation results
of Gou\"ezel \cite{Go, Goth} can be used.
It would apply e.g. to the more general one-dimensional maps with finitely many neutral
points described in \cite[Section 5]{LSV}.
Since our goal is to describe a new mechanism (demonstrating in particular
how  invariant cone techniques
\footnote{Bomfim et al.  \cite{BomCasVar12} use invariant cones to obtain
differentiability of some equilibrium measures (enjoying a spectral gap)
of Pomeau--Manneville
maps. Their results do not apply to the SRB measure, and thus do not 
include linear response in the sense of the present
work.} can be implemented) for linear response in the presence of
neutral fixed points in the simplest setting,  we leave such generalisations to further works.

\medskip
 We end this introduction with comments  about bifurcations and the
 singularities of $\rho_\alpha$.

By   \cite[Theorem 1]{Th}, for any $\alpha \in (0,1)$, there exists $0<c_1<c_2$ so that 
\footnote{The upper bound also follows from \cite[Lemma 2.3]{LSV}.}
\begin{equation}
\label{basic}
c_1 x^{-\alpha} \le \rho_\alpha(x)\le c_2 x^{-\alpha}
\, .
\end{equation}
It is easy to see (e.g. via symbolic dynamics)
that the maps $\{T_\alpha\mid \alpha \in [0,1)\}$ belong to the same topological class,
so that bifurcations do not occur. 
However the conjugacy $h_\alpha$ between $T_0$ and $T_\alpha$
is
not differentiable. Indeed, if it were, then we would have $T_\alpha'(0)=T'_0(0)$
at the fixed point $0$, but this is impossible
\footnote{Also, there are many periodic points $x_0$  for $T_0$ such that if $p$ is the period then
 $(T_0^p)'(x_0)= 2^p$, but $(T_\alpha^p)'(h_\alpha(x_0))\ne  2^p$.}  since $T'_0(0)=2$,
while $T_\alpha'(0)=1$ for $\alpha >0$. More generally, take arbitrary $\alpha \ne \beta$.
 The conjugacy $h_{\beta,\alpha}$ between $T_\alpha$ and $T_\beta$ maps the invariant density $\rho_\alpha$ to $\rho_\beta$. Therefore $h_{\beta,\alpha}$  cannot be differentiable, since otherwise
it would contradict \eqref{basic}.

Another lesson of recent research \cite{Ru2, Ru3, B3, B2, CD}  on linear response is
that understanding the singularities of the SRB measure is essential. 
In our application, the density 
$\rho_\alpha$ is smooth 
on $(0,1]$.
The only ``critical point'' of $T_\alpha$ is the neutral fixed point at $0$, so that the ``postcritical
orbit'' is reduced to a single point.  
By \eqref{basic}, the singularity type of $\rho_\alpha$ at $0$ is $x^{-\alpha}$. So, heuristically,
for a bounded observable $\psi$, the contribution of the origin to 
$\partial_\alpha \int \psi d \mu_\alpha$ should be
$\int x^{-\alpha} \log x \cdot \psi(x) \, dx$, which is indeed well defined.\footnote{While this paper was being finished, it was pointed out to us by I. Melbourne that this heuristic argument can be made rigorous for the special family of maps studied by Thaler in \cite[Section 2]{Tha00}, where
the invariant density takes an explicit form.}
Indeed, this heuristic remark sheds some light on the otherwise
mysterious singularity  cancellation $X_\alpha \rho'_\alpha \sim
 \log x \sim X'_\alpha \rho_\alpha$.
Our approach should extend to give higher order derivatives of $\alpha \to \mu_\alpha$
(using invariant cones with more derivatives).

\medskip
After the first version of this paper 
(in which our result was restricted to $\alpha \in [0,1/2)$ and $L^\infty$ observables)
was posted on the
arXiv, Korepanov     \cite{Ko} obtained linear response
(without the formula) for all $\alpha \in
(0,1)$ and $L^q$ observables
 (for $q> (1-\alpha)^{-1}$). His method of proof (using inducing) is
 different from ours.

\section{Linear response formula for Pomeau--Manneville maps}

\subsection{Statement of the main result}

We  consider the transfer operator $\LL_\alpha$ defined,
e.g. on $L^\infty(dx)$, by
(note that $\inf T'_\alpha \ge 1$ so absolute values are not needed)
$$
\LL_\alpha \varphi(x)=\sum_{T_\alpha(y)=x}
\frac{\varphi(y)}{T'_\alpha(y)} \, .
$$
(For functions depending both on $\alpha$ and
$x$ we denote by  $'$ the derivative w.r.t.\  $x$ and $\partial_\alpha$ the
derivative w.r.t.\ $\alpha$.)

We introduce some  notation in order to state
our main result. Let $f_\alpha :[0,1/2]\to [0,1]$ and $g_\alpha: [0,1]\to [0,1/2]$ be defined by
\begin{equation}\label{invv}
f_\alpha(x)=x(1+2^\alpha x^\alpha)\, , \quad g_\alpha(y)= f^{-1}_\alpha(y)  \, .
\end{equation}
Note that  $g_0(y)=y/2$, while for  $\alpha >0$ we have
\begin{equation}\label{useful}
|g_\alpha(y)- y(1-2^\alpha y^\alpha)|\le C(y^{1+2\alpha})\, , \forall y\in [0,1]\, .
\end{equation} 

For $0\le x\le 1/2$ and $0\le \beta \le 1$, we have
$
v_\beta(x):=\partial_\beta T_\beta(x)=2^\beta x^{1+\beta}\log(2 x)$.
Therefore, for $0\le x\le 1$ and $0\le \beta \le 1$, 
\begin{equation}\label{Xb}
X_\beta:=v_\beta\circ  g_\beta=2^\beta g_\beta^{1+\beta}\log (2g_\beta)\, ,
\qquad\qquad
|X_\beta(x)|\le c x^{1+\beta} (|\log x|+1)\, ,
\end{equation}
and
\begin{equation}\label{X'b}
X'_\beta=2^\beta g_\beta' g_\beta^\beta\left[(1+\beta)  \log (2g_\beta)+1\right]\, , 
\qquad\quad \, \,\,   |X'_\beta(x)| \le  c x^{\beta} (|\log x|+1)\, ,
\end{equation}
and, finally, 
\begin{align}
\label{X''b}
  X''_\beta=2^\beta g_\beta^{\beta-1} \bigl [ g_\beta'' g_\beta  
  \big((1+\beta)\log (2g_\beta)+1\big) 
&+  (g_\beta')^2  (  (\beta+\beta^2)\log (2g_\beta)+ 1+2\beta)\bigr ],\\
\nonumber & \qquad
  |X''_\beta(x)|\le  c x^{\beta-1} (|\log x|+1)\, . 
  \end{align}
(The properties of $X_\beta$
and its derivatives above are  at the heart of the mechanism of the proofs.)
Since $T_\alpha(x)$ is independent from $\alpha$
if $x>1/2$, to state our main result,
we  need the  transfer operator associated to the first branch
of $T_\alpha$ by
$$
\NN_\alpha \varphi(x)=g_\alpha'(x)\cdot  \varphi(g_\alpha(x)) \, .
$$

\begin{theorem}[Linear response formula]\label{main}
Let $\alpha \in (0, 1)$. Then for any $q> (1-\alpha)^{-1}$
and any  $\psi \in L^q(dx)$
\begin{equation}\label{LRF}
\lim_{\epsilon \to 0}
\epsilon ^{-1}\biggl ( \int_0^1 \psi \, d\mu_\alpha - \int_0^1 \psi \, d\mu_{\alpha +\epsilon}
\biggr ) 
=- \int_0^1 \psi (\id -\LL_\alpha)^{-1} \bigl [ (X_\alpha\NN_\alpha (\rho_\alpha))'\bigr ] \, 
dx\, .  \end{equation}
(In particular the  right-hand side of \eqref{LRF} is well-defined.)
In addition, the  right-hand side of \eqref{LRF}
can be written as the following absolutely convergent sum 
\begin{equation*}
-\sum_{k \ge 0} \int_0^1 \psi \LL_\alpha^k \bigl [ (X_\alpha\NN_\alpha (\rho_\alpha))'\bigr ]
dx=
-\sum_{k \ge 0} \int_0^1 (\psi \circ T_\alpha^k)  (X_\alpha\NN_\alpha (\rho_\alpha))'
dx
\, .
\end{equation*}
The result also holds for $\alpha=0$, taking the limit as  $\epsilon\downarrow 0$ in \eqref{LRF}.
For $p\in[1,\infty)$, the map $\alpha \mapsto \partial_\alpha \rho_\alpha\in L^{p}(dx)$ is 
continuous on $[0,1/p)$.
\end{theorem}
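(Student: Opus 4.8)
The plan is to give the right-hand side of \eqref{LRF} a meaning by constructing $(\id-\LL_\alpha)^{-1}$ by hand on a single, carefully chosen vector, using the invariant cones of \cite{LSV}, and then to upgrade the following formal computation to an honest statement about derivatives. Write $\LL_\alpha=\NN_\alpha+\MM$, where $\MM\varphi(x)=\tfrac12\varphi((x+1)/2)$ is the transfer operator of the second (linear, $\alpha$-independent) branch, so $\partial_\alpha\LL_\alpha=\partial_\alpha\NN_\alpha$. Differentiating $f_\alpha\circ g_\alpha=\id$ in $\alpha$ gives $\partial_\alpha g_\alpha=-X_\alpha\,g_\alpha'$, and two applications of the Leibniz rule then give $\partial_\alpha(\NN_\alpha\varphi)=-(X_\alpha\NN_\alpha\varphi)'$ for smooth $\varphi$. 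Differentiating the fixed-point equation $\rho_\alpha=\LL_\alpha\rho_\alpha$ thus yields, formally, $(\id-\LL_\alpha)\,\partial_\alpha\rho_\alpha=-(X_\alpha\NN_\alpha(\rho_\alpha))'$, i.e.\ $\partial_\alpha\rho_\alpha=-(\id-\LL_\alpha)^{-1}\bigl[(X_\alpha\NN_\alpha(\rho_\alpha))'\bigr]$; pairing against $\psi$ produces \eqref{LRF}, and its series form follows from $(\id-\LL_\alpha)^{-1}=\sum_{k\ge0}\LL_\alpha^{k}$ together with the duality $\int\psi\,\LL_\alpha^{k}\varphi\,dx=\int(\psi\circ T_\alpha^{k})\,\varphi\,dx$. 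Two things must be justified: $(\id-\LL_\alpha)^{-1}$ is \emph{not} a bounded operator (no spectral gap for $\alpha>0$), so it has to be built directly on the source term $h_\alpha:=(X_\alpha\NN_\alpha(\rho_\alpha))'$; and $\alpha\mapsto\rho_\alpha$ must be shown differentiable in a topology compatible with that construction, uniformly enough for the final continuity assertion.

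I would first record, using a cone $\CC_*$ of the type of \cite{LSV} (functions positive, decreasing, with $x^{1+\alpha}\varphi(x)$ increasing and $\varphi(x)\le a\,x^{-\alpha}\int\varphi$, invariant under $\LL_\alpha$ and contracted in the associated Hilbert metric), the a priori bounds $\rho_\alpha\in\CC_*$, $|\rho_\alpha'(x)|\le C\,x^{-1-\alpha}$ and $|\rho_\alpha''(x)|\le C\,x^{-2-\alpha}$, all uniform for $\alpha$ in compact subintervals of $[0,1)$ (of which \eqref{basic} is the first). The mechanism of the proof is then the \emph{singularity cancellation}: by \eqref{Xb}--\eqref{X''b}, $|X_\alpha(x)|\le C x^{1+\alpha}(|\log x|+1)$, $|X_\alpha'(x)|\le C x^{\alpha}(|\log x|+1)$ and $|X_\alpha''(x)|\le C x^{\alpha-1}(|\log x|+1)$, so $H_\alpha:=X_\alpha\,\NN_\alpha(\rho_\alpha)$ is continuous on $[0,1]$, vanishes at both endpoints (note $X_\alpha(1)=v_\alpha(g_\alpha(1))=v_\alpha(1/2)=0$) and is comparable to $x|\log x|$ near $0$, while $h_\alpha=H_\alpha'$ is only comparable to $|\log x|$ near $0$; in particular $h_\alpha\in L^{p}(dx)$ for every $p<\infty$ and $\int_0^1 h_\alpha\,dx=[H_\alpha]_0^1=0$, which is precisely what $(\id-\LL_\alpha)^{-1}h_\alpha$ requires.

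The core estimate is $\sum_{k\ge0}\|\LL_\alpha^{k}h_\alpha\|_{L^{q'}(dx)}<\infty$ for every $q'<1/\alpha$. Since $q>(1-\alpha)^{-1}$ is exactly $q'<1/\alpha$, this defines $(\id-\LL_\alpha)^{-1}h_\alpha\in L^{q'}$ and, by H\"older, makes the right-hand side of \eqref{LRF} an absolutely convergent series against every $\psi\in L^q$ (by density it suffices to treat bounded $\psi$ first, both sides of \eqref{LRF} being continuous in $\psi\in L^q$, uniformly in $\epsilon$, once $\rho_\alpha,\partial_\alpha\rho_\alpha\in L^{q'}$). To prove it, split $H_\alpha=\chi H_\alpha+(1-\chi)H_\alpha$ with $\chi$ smooth, $\equiv1$ near $0$ and supported in a small neighbourhood of $0$, so $h_\alpha=(\chi H_\alpha)'+((1-\chi)H_\alpha)'$ with each summand of zero Lebesgue integral. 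The term $((1-\chi)H_\alpha)'$ is Lipschitz and vanishes near $0$, hence equals $\phi\rho_\alpha$ with $\phi$ Lipschitz, $\phi\equiv0$ near $0$ and $\int\phi\,d\mu_\alpha=0$, so the decorrelation bounds apply: the rate $n^{1-1/\alpha}$ of \cite{Go,Sa} (summable for $\alpha<1/2$) and, for $\alpha\ge1/2$, Gou\"ezel's sharper $n^{-1/\alpha}$ from \cite{Go,Goth} (summable for all $\alpha<1$) control $\sum_k|\int(\psi\circ T_\alpha^{k})\,((1-\chi)H_\alpha)'\,dx|$. The term $(\chi H_\alpha)'$ is supported near $0$; here the singularity cancellation lets one write it, up to a fixed multiple of $x^{-\alpha}\in\CC_*$, as a difference of two normalised elements of $\CC_*$, whose $\LL_\alpha$-iterates are controlled by the Hilbert-metric contraction together with the polynomial rate --- and whose $\CC_*$-size can be made small by shrinking $\supp\chi$.

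Finally, to replace the formal identity by the genuine limit I would compare $\rho_{\alpha+\epsilon}$ and $\rho_\alpha$ through $(\id-\LL_{\alpha+\epsilon})(\rho_{\alpha+\epsilon}-\rho_\alpha)=(\LL_{\alpha+\epsilon}-\LL_\alpha)\rho_\alpha$, using \eqref{Xb}--\eqref{X''b} and the a priori cone bounds to write $(\LL_{\alpha+\epsilon}-\LL_\alpha)\rho_\alpha=-\epsilon\,h_\alpha+r_\epsilon$ with $\|r_\epsilon\|$ small in the $L^{q'}$-type seminorm dual to $L^q$; running the core estimate uniformly for small $\epsilon$ shows that $(\id-\LL_{\alpha+\epsilon})^{-1}$ is uniformly bounded on the relevant subspace and converges there to $(\id-\LL_\alpha)^{-1}$, whence $\rho_{\alpha+\epsilon}-\rho_\alpha=-\epsilon\,(\id-\LL_\alpha)^{-1}h_\alpha+o(\epsilon)$ tested against $\psi\in L^q$, i.e.\ \eqref{LRF}; the case $\alpha=0$ is the one-sided limit $\epsilon\downarrow0$ (here $T_0$ has a spectral gap, but one still needs the bounds uniformly as $\alpha\to0^+$, which the $\alpha$-uniform cone estimates and statistical stability \cite{FT} supply). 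Running the same estimates with $\alpha$ ranging over $[0,1/p)$ and tested against all of $L^p$ gives continuity of $\alpha\mapsto\partial_\alpha\rho_\alpha=-(\id-\LL_\alpha)^{-1}h_\alpha\in L^{p}(dx)$, the exponent constraint being again $p<1/\alpha$. \textbf{The main obstacle} is the core estimate: with no spectral gap, $(\id-\LL_\alpha)^{-1}$ exists only as a conditionally convergent series built from delicate (and, for $\alpha\ge1/2$, sharp Gou\"ezel) decorrelation estimates that a priori demand observables with a good modulus of continuity, which $h_\alpha$, being a derivative, lacks near the neutral point; showing that the $X_\alpha$-factors compensate the blow-up of $\rho_\alpha$ and $\rho_\alpha'$ \emph{drastically enough} to still feed that machinery, uniformly in $\epsilon$ and in $\alpha$, is the heart of the matter.
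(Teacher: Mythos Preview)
Your outline identifies the right mechanism (the $X_\alpha$-induced cancellation of the $x^{-\alpha}$ singularity) and correctly flags that Gou\"ezel's sharper $n^{-1/\alpha}$ rate is needed for $\alpha\ge1/2$. But there is a gap in how you handle the near-$0$ piece $(\chi H_\alpha)'$: writing it as a difference of cone elements and invoking ``Hilbert-metric contraction together with the polynomial rate'' gives only the LSV rate $k^{1-1/\alpha}$, which is \emph{not summable} for $\alpha\ge1/2$, and making the $\CC_*$-size small by shrinking $\supp\chi$ just multiplies a divergent series by a small constant --- the sum is still infinite. The paper closes this by avoiding the cutoff entirely: it observes that $f:=-h_\alpha/\rho_\alpha$ is H\"older, has $\mu_\alpha$-mean zero, and vanishes \emph{at} $0$ (not merely in a neighbourhood) with rate $|f(x)|\le C\,x^{\alpha(1-\epsilon/2)}$; this is precisely the hypothesis of \cite[Thm~2.4.14]{Goth}, which then yields the summable rate $j^{-1/\alpha+\epsilon}$ for the whole of $h_\alpha$ at once. (Your splitting \emph{does} work when $\alpha<1/2$, and the paper in fact records a cone-only proof for that range in an appendix.)

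The differentiability step is organised differently as well. Instead of building $(\id-\LL_{\alpha+\epsilon})^{-1}$ on a subspace and proving convergence of resolvents, the paper telescopes $\LL_\beta^{k}\mathbf{1}-\LL_\alpha^{k}\mathbf{1}=\sum_j\LL_\beta^{j}(\LL_\beta-\LL_\alpha)\LL_\alpha^{k-j-1}\mathbf{1}$ for a suitably growing $k=k(\beta)$, expands $\LL_\beta-\LL_\alpha$ to \emph{second} order in $\beta-\alpha$, and controls the remainder via an invariant cone $\CC_2$ carrying \emph{two} derivatives (which your sketch does not invoke); Gou\"ezel's bound is then applied to each $\LL_\beta^{j}$ acting on source terms of the same $(X_\gamma\NN_\gamma\varphi)'$ type with $\varphi=\LL_\alpha^{k-j-1}\mathbf{1}\in\CC_2\cap\CC_{*,1}$. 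Two smaller discrepancies: the passage from $\psi\in L^\infty$ to $\psi\in L^q$ is not by density but by truncating $\psi$ at level $M(j)=j^\eta$ inside the $j$th term of the series and choosing $\eta$ so that both resulting sub-series converge; and your ``core estimate'' $\sum_k\|\LL_\alpha^k h_\alpha\|_{L^{q'}}<\infty$ is stronger than what the decorrelation bounds deliver --- they control $\int(\psi\circ T_\alpha^k)\,h_\alpha\,dx$ for \emph{bounded} $\psi$, which by duality gives $L^1$ decay of $\LL_\alpha^k h_\alpha$ but not $L^{q'}$ decay for $q'>1$.
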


Integration by parts allows us to rewrite the convergent sum as
\begin{equation}\label{susc}
-\sum_{k \ge 0} \int_0^1 (\psi \circ T_\alpha^k)  (X_\alpha\NN_\alpha (\rho_\alpha))'
dx\\
=
\sum_{k \ge 0} \int_0^1 (\psi \circ T_\alpha^k)'  X_\alpha\NN_\alpha (\rho_\alpha) dx
\, .
\end{equation}
We conjecture that the above results also hold for $\alpha <0$ in some parameter
range, but the proof will require modifications.

\begin{remark}\label{amazing}
It seems our proof also applies to the one-parameter family
$$
G_t(x)=\begin{cases}
(1+t)  x+ (1-t) 2^t x^{1+t}\, ,  & x\in [0,1/2)\, ,\\
2x-1 \, , & x\in [1/2,1] \, ,
\end{cases} \qquad\qquad
t\in [0, \epsilon] \, . 
$$
\end{remark}

\begin{remark}
For the sake of comparison with previous works
 (e.g. \cite{B3}), we can consider a one-parameter family $F_\beta$ obtained
by perturbations in the image, i.e.
so that $v_\beta =\partial_\beta F_\beta =X_\beta \circ F_\beta$ for some
$X_\beta$. This
can be achieved by perturbing the first branch ($x<1/2$)  in order
to have $T_\beta=F_\beta$ and requiring the second branch ($x\ge 1/2$)
to move ``sympathetically'' with the first one. More precisely, for fixed $\alpha$,
consider the one-parameter family $F_{\beta,\alpha}$  which satisfies
$$
F_{\alpha,\alpha}(x)=T_\alpha(x)\, , \forall x\in [0,1]\, ,
\qquad
F_{\beta, \alpha}(x)=T_\beta(x) \, , \forall x\in [0,1/2) \, ,
$$
 and, setting 
$$ 
X_{\beta}(x)=:v_\beta \circ g_\beta(x)=\partial_\beta T_\beta \circ g_\beta(x)
\, , \forall x\in [0,1]\, ,$$
so that
$$
v_{\beta, \alpha}:= \partial_\beta F_{\beta, \alpha}=X_{\beta} \circ F_{\beta, \alpha} \, .
$$
(For $x \in [0,1/2)$ this is automatic, and for $x\in [1/2,1]$ it can
be obtained by solving the ODE
$\partial_\beta F_{\beta,\alpha}= X_\beta \circ F_{\beta,\alpha}$
with initial condition $F_{\alpha,\alpha}(x)=2x-1$ on $[1/2,1]$. By the Picard-Lindel\"of theorem, this ODE has a unique solution
since $(\beta,y)\mapsto X_\beta (y)$ is continuous in $\beta$.)
If $\alpha$ is fixed, slightly abusing notation, we sometimes write $F_\beta$, $v_\beta$, $X_\beta$,
and  $\LL_\beta$,
instead of $F_{\beta,\alpha}$, $v_{\beta,\alpha}$, $X_{\beta,\alpha}$,
and $\LL_{\beta,\alpha}$,
when the meaning is clear. It is not difficult to prove that $F_\beta$ has a
unique absolutely continuous invariant measure  $\hat\mu_\beta =\hat \rho_\beta\, dx$
satisfying the same properties as $\mu_\beta$,
and the proof of Theorem~\ref{main} shows that
for any $\alpha \in (0, 1)$ and any $\psi \in L^\infty(dx)$
\begin{equation}\label{LRF'}
\lim_{\epsilon \to 0}
\epsilon ^{-1}\biggl ( \int_0^1 \psi \, d\hat \mu_\alpha - \int_0^1 \psi \, d\hat \mu_{\alpha +\epsilon}
\biggr ) 
=- \int_0^1 \psi (\id -\LL_\alpha)^{-1} \bigl [ (X_\alpha\hat  \rho_\alpha)'\bigr ] 
dx \, . \end{equation}
The result also holds for $\alpha=0$, taking the limit as  $\epsilon\downarrow 0$ in \eqref{LRF'}.

Just like \eqref{LRF}, the expression \eqref{LRF'} can be written as an absolutely convergent sum.
Integration by parts gives
$\int (\psi' \circ T_\alpha^k ) \cdot   (T_\alpha^k )'(x) X_\alpha(x)  \rho_\alpha(x) \, dx$.
This is just $\Psi(1)$ where $\Psi(z)$ is the susceptibility function (see e.g. \cite{B3}).
It would be interesting to analyse  the singularity 
type of the susceptibility function at $z=1$. (See \cite{BMS} for the corresponding analysis
for piecewise expanding maps.)
\end{remark}

\subsection{Invariant cones} 
\label{invc}

  Before, proving the theorem, we introduce notations
and state
useful results regarding cones adapted from \cite{LSV}.
 
As our proof requires higher derivatives we shall use the following fact:

\begin{proposition}[Invariant cone in $\CC^2$]\label{cheat'}
For fixed 
$b_1\ge \alpha+1$, $b_2\ge b_1$,    $\bar b_1>0$, $\bar b_2>0$, 
define the cone $\CC_2$ to be the set of $\varphi \in \CC^2(0,1]$ so that
\begin{align*} 
&\varphi(x) \ge 0\, , 
\, \,  \, \frac{\bar b_1}{x}  \varphi(x)\le -\varphi'(x) \le \frac{b_1}{x}  \varphi(x)\, , 
\, \text{ and } \, \, 
 \frac{\bar b_2}{x^2}  \varphi(x)\le \varphi''(x)\le \frac{b_2}{x^2}  \varphi(x)\, ,
 \,  \,  \forall x \in (0,1] \,  .
\end{align*}
Then there exists $b_{max}<\infty$
so that for any $0\le \alpha <1$   there exists $\alpha+1 \le b'(\alpha) <b_{max}$
and $\bar b >1/b_{max}$ so that
if  $b_1\ge \alpha+1$, $ b_2 \ge b'(\alpha)$,  $\max\{ \bar b_1, \bar b_2\}
\le 1/b'(\alpha)$ we have 
\begin {equation}
\label{miracle2}
\varphi \in \CC_2 \, 
 \Longrightarrow\, \, \,  \LL_\alpha(\varphi )\in \CC_2
\mbox{ and } 
\NN_\alpha (\varphi )\in \CC_2 \, .
\end{equation}
\end{proposition}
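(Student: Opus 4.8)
The plan is to prove the invariance of the cone $\CC_2$ by explicit computation of the action of the two operators $\LL_\alpha$ and $\NN_\alpha$ on the relevant ratios $-\varphi'/\varphi$ and $\varphi''/\varphi$, following the strategy of Liverani--Saussol--Vaiano \cite{LSV} but one derivative higher. First I would record the structure of $\LL_\alpha$: since the second branch $x\mapsto 2x-1$ is linear with slope $2$, one has $\LL_\alpha\varphi = \NN_\alpha\varphi + \frac12\varphi((\cdot+1)/2)$, so it suffices to check invariance separately for the ``bad'' branch operator $\NN_\alpha\varphi(x)=g_\alpha'(x)\varphi(g_\alpha(x))$ and for the ``good'' affine branch operator $\varphi\mapsto \tfrac12\varphi((\cdot+1)/2)$, and then note that $\CC_2$ is a convex cone closed under addition. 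The affine branch is easy: composition with a contraction by $2$ and rescaling by $\tfrac12$ improves all the constants (it shrinks $x$-dependence), so the only real content is the $\NN_\alpha$ branch near $x=0$, where $g_\alpha(x)\approx x(1-2^\alpha x^\alpha)$ and $g_\alpha'(x)\approx 1-(1+\alpha)2^\alpha x^\alpha$ by \eqref{useful}.

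Next I would set $h(x) = -\log g_\alpha'(x) \ge 0$ (using $g_\alpha' \le 1$) and compute: writing $w(x) = \NN_\alpha\varphi(x)$, one gets
$$
-\frac{w'}{w}(x) = h'(x) - g_\alpha'(x)\,\frac{\varphi'}{\varphi}(g_\alpha(x)),
$$
and similarly $w''/w = h'(x)^2 - h''(x) - 2h'(x) g_\alpha'(x)(\varphi'/\varphi)(g_\alpha(x)) + (g_\alpha'(x))^2(\varphi''/\varphi)(g_\alpha(x))$. The job is then to verify the two two-sided inequalities defining $\CC_2$ for $w$. For the first-derivative bound $\tfrac{\bar b_1}{x} w \le -w' \le \tfrac{b_1}{x} w$, I would plug in $\varphi\in\CC_2$ so that $(\varphi'/\varphi)(g_\alpha(x)) \in [-b_1/g_\alpha(x), -\bar b_1/g_\alpha(x)]$, and use $g_\alpha'(x)/g_\alpha(x) \le 1/x$ together with the asymptotics $h'(x) = O(x^{\alpha-1})$, $x g_\alpha'(x)/g_\alpha(x) = 1 + O(x^\alpha)$. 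The upper bound on $-w'$ needs $b_1 \ge \alpha+1$ (this is exactly why the hypothesis $b_1\ge\alpha+1$ appears): near $0$ one needs $x h'(x) + b_1 (x g_\alpha'(x)/g_\alpha(x)) \le b_1$, i.e. roughly $(1+\alpha)2^\alpha x^\alpha + b_1(1 - (1+\alpha)2^\alpha x^\alpha + \ldots) \le b_1$, which holds for small $x$ precisely when $b_1 \ge 1+\alpha$; away from $0$ it holds with room to spare by a compactness/uniform-bound argument. The lower bound on $-w'$ requires $\bar b_1$ small enough — here $h'(x)\ge 0$ helps — and away from $0$ one uses that $g_\alpha'$ is bounded below on $[\delta,1]$.

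Then I would turn to the second-derivative bound $\tfrac{\bar b_2}{x^2} w \le w'' \le \tfrac{b_2}{x^2} w$, which is the main obstacle, as it is the genuinely new ingredient beyond \cite{LSV}. Using $\varphi\in\CC_2$ I would bound $(\varphi''/\varphi)(g_\alpha(x)) \in [\bar b_2/g_\alpha(x)^2, b_2/g_\alpha(x)^2]$ and control the cross term via the already-available first-derivative bound; the key estimates are $(g_\alpha'(x))^2/g_\alpha(x)^2 \le 1/x^2$, $x^2 h'(x)^2 = O(x^{2\alpha})$, $x^2 h''(x) = O(x^{\alpha})$, and $x^2 (g_\alpha')^2/g_\alpha^2 = 1 + O(x^\alpha)$. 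The upper bound then reduces near $0$ to something like $O(x^{\alpha}) + 2 b_1\cdot O(x^{\alpha-1})\cdot\text{(bounded)} + b_2(1 + O(x^\alpha)) \le b_2$; the dangerous cross term $-2h'(x)g_\alpha'(x)(\varphi'/\varphi)(g_\alpha(x))$ is a product of a negative quantity ($\varphi'/\varphi < 0$) with $h'(x) \ge 0$, so it is \emph{positive} and of order $x^{\alpha-1}/g_\alpha \sim x^{\alpha-2}$ — this is the term that forces us to allow $b_2$ to be large, and to choose $b'(\alpha)$ depending on $\alpha$: one checks that its coefficient relative to $b_2$ can be absorbed once $b_2 \ge b'(\alpha)$ for a suitable threshold, because near $0$ the term $b_2 \cdot x^2(g_\alpha')^2/g_\alpha^2$ beats $b_2$ by a margin of size $\sim b_2 x^\alpha$ which dominates the fixed $O(x^{\alpha-2})\cdot x^2 = O(x^\alpha)$ cross-term contribution once $b_2$ is large enough. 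For the lower bound $w'' \ge \bar b_2 w/x^2$ I would use that $h'(x)^2 - h''(x) \ge 0$ for small $x$ (check via the explicit asymptotics of $g_\alpha$) and that the cross term is positive, so $w'' \ge (g_\alpha')^2(\varphi''/\varphi)(g_\alpha) w \ge \bar b_2 (g_\alpha'/g_\alpha)^2 w$, which exceeds $\bar b_2 w/x^2$ would be the wrong direction — instead one notes $(g_\alpha'(x)/g_\alpha(x))^2 \ge c/x^2$ fails, so one must be more careful and use $\bar b_2$ genuinely small, exploiting the positive contributions of the other terms; here the uniform lower bound on $g_\alpha'$ away from $0$ again settles the compact region $[\delta,1]$. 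Finally I would assemble the $\alpha$-uniformity: all error terms are $O(x^\alpha)$ or $O(x^\alpha|\log x|)$ with constants uniform for $\alpha\in[0,1)$ (since $2^\alpha\le 2$ and $x^\alpha|\log x|$ is uniformly bounded on $(0,1]$), so a single $b_{max}$, a single small $\bar b$, and a choice $b'(\alpha)$ continuous in $\alpha$ (staying below $b_{max}$) work simultaneously, completing the proof.
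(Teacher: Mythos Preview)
Your overall strategy --- computing $-w'/w$ and $w''/w$ for $w=\NN_\alpha\varphi$ and checking the cone inequalities --- is the same as the paper's. The paper parametrises by $y=g_\alpha(x)$ and writes everything in terms of $T_\alpha$ (using $g_\alpha'=1/T'_\alpha(y)$), which is equivalent to your $h=-\log g_\alpha'$ formulation.

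There is, however, a gap in your reduction. The affine-branch contribution $x\mapsto\tfrac12\varphi((x+1)/2)$ does \emph{not} preserve the lower-bound conditions in $\CC_2$: if $\tilde\varphi(x)=\tfrac12\varphi((x+1)/2)$ then
$-\tilde\varphi'(x)/\tilde\varphi(x)=\tfrac12\,(-\varphi'/\varphi)\bigl((x+1)/2\bigr)\ge \bar b_1/(x+1)$,
which is \emph{smaller} than $\bar b_1/x$ for every $x\in(0,1]$; the bound $\tilde\varphi''\ge\bar b_2\tilde\varphi/x^2$ fails for the same reason. So you cannot conclude $\LL_\alpha(\CC_2)\subset\CC_2$ simply by adding two cone elements. (For the \emph{upper} bounds the decomposition is fine, since $b_1/(x+1)\le b_1/x$.) For the lower bounds one must work with $\LL_\alpha$ directly and use the strict excess produced by the $T''_\alpha>0$ term in the $\NN_\alpha$ branch to absorb the deficit from the linear one; this is what the paper's ``the proof for $\LL_\alpha$ follows easily'' is actually hiding.

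A second, softer point: the paper avoids your asymptotics-plus-compactness split. Writing the first-derivative upper bound in terms of $y$ gives
\[
\Omega_1(y)=\frac{T_\alpha(y)}{b_1\,y\,T'_\alpha(y)}\Bigl(\frac{y\,T''_\alpha(y)}{T'_\alpha(y)}+b_1\Bigr),
\]
which, with $u=\alpha\, 2^\alpha y^\alpha/(1+(1+\alpha)2^\alpha y^\alpha)$, factors exactly as $(1-u)\bigl(1+\tfrac{\alpha+1}{b_1}u\bigr)\le(1-u)(1+u)\le 1$ for all $y\in[0,1/2]$ once $b_1\ge\alpha+1$. The second-derivative bound works the same way with $(1-u)^2$ in front, making the threshold for $b_2$ explicit and the argument uniform on the whole interval. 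Your leading-order coefficients are also slightly off (e.g.\ $xh'(x)\sim\alpha(1+\alpha)2^\alpha y^\alpha$, not $(1+\alpha)2^\alpha x^\alpha$), though the mechanism you describe is the correct one.
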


The proof of Proposition~\ref{cheat'} is given in Appendix~\ref{theproof}.

For $\varphi \in L^1(dx)$ we set
$
m(\varphi)=\int_0^1 \varphi(x)\, dx
$.
For  $a\ge 1$,  we denote by  $\CC_{*}=\CC_*(\alpha, a)$  the  cone
\begin{equation}
\label{Cstar}
\CC_{*}:=\left\{ \varphi \in \CC^1(0,1]\mid 
0\le \varphi(x) \le 2 a \rho_\alpha(x) m(\varphi) \, , \, \, 
  - \frac{\alpha+1}{x}  \varphi(x)\le \varphi'(x) \le 0\, , \, \, 
  \forall x \in (0,1] \right\}\, .
\end{equation}
By \cite[Lemma 2.2]{LSV}, we have
\footnote{Noting that  $- \frac{\alpha+1}{x}  \varphi(x)\le \varphi'(x) \le 0$
if and only if $\varphi$ is decreasing and $x^{\alpha+1}\varphi$ is increasing.}
\begin{equation}\label{Cstarii}
\LL_\alpha(C_{*}(\alpha,a))\subset \CC_{*}(\alpha,a)\, ,\quad
\forall a \ge 2^\alpha(\alpha+2)\, .
\end{equation}
Note also that by definition (this will be used to show \eqref{half'})
\begin{equation}\label{half}
\int_0^{1/2} \varphi\, dx \ge \frac{1}{2} m(\varphi) \, , \qquad \forall \varphi \in \CC_*\, .
\end{equation}
Finally,
for  $a\ge  2^\alpha(\alpha+2)$ and $b_1\ge \alpha+1$,  we denote by  $\CC_{*,1}=\CC_{*,1}(\alpha, a, b_1)$  the  cone
\begin{equation}
\label{Cstar1}
\CC_{*,1}:=\left\{ \varphi \in \CC^1(0,1]\mid 
0\le \varphi(x) \le 2 a \rho_\alpha(x) m(\varphi) \, , \, 
  |\varphi'(x)| \le \frac{b_1}{x}  \varphi(x)\, , \, 
  \forall x \in (0,1] \right\}\, .
\end{equation}
Note that    \eqref{basic} implies
\begin{equation}
\label{Cstar1_basic}
\varphi(x) \le \frac{ 2ac_2 }{ x^\alpha}m(\varphi) \, , \qquad
\forall \varphi \in \CC_{*,1}(\alpha)\, , \, \, \forall x\in (0,1]\, .
\end{equation}
By definition,  $\CC_{*,1}\subset L^1(dx)$, and
\begin{equation}\label{Cstari}
 \beta \ge \alpha\ge 0\, \Longrightarrow\, 
\, 
\CC_{*,1}(\alpha,a,b_1)\subset \CC_{*,1}\left(\beta,\frac{c_2 }{c_1}a,b_1\right) \, .
\end{equation}
Also, the arguments of \cite{LSV} give $a'(\alpha)$ so that
\begin{equation}\label{coned}
\rho_\alpha\in  \CC_*(\alpha) \cap\CC_{*,1}(\alpha)
\cap \CC_2
\end{equation}
 if the parameters satisfy $a\ge a'(\alpha)$,  $b_1\ge \alpha+1$, and $b_2 >b'(\alpha)$.

\begin{remark}[Cones $\CC_*$ and $\CC_{*,1}$.]
The definition of $\CC_{*,1}$ will make it
easy to check that $-(X_\alpha \varphi)'+c_\varphi$,  for suitable
 $\varphi \in \CC_{*,1}$ and constant $c_\varphi>0$, lies in a cone $\CC_{*,1}$ (possibly for larger
 $a$ and $b_1$), via  the Leibniz formula. In \S \ref{theprooft} this will
 be used to get bounds $\varphi(x)\le C x^{-\alpha}$ and
 $|\varphi'(x)|\le C x^{-1-\alpha}$, while in Appendix~\ref{coneonly} these cones
 play a more important role. This
 is why we  use
$\CC_{*,1}$ instead of the cone  $\CC_*$ 
used in \cite{LSV}. However, the condition that $\varphi(x)$ is decreasing will
be used to get \eqref{half'}.
(In \cite{LSV} the condition that $\varphi(x)$ be decreasing
 is only used in \cite[Lemma 2.1]{LSV}, which we do not need
in view of Proposition~\ref{cheat'} and \eqref{toremember1}.)
\end{remark}

We will use 
the following result
(see Appendix~\ref{theproof} for the proof):

\begin{proposition}[Invariance of the cone $\CC_{*,1}$]\label{replaces4.10}
Fix $\alpha \in (0,1)$, $a\ge 1$, and $b_1 \ge \alpha+1$.
Then
\begin{align}
\nonumber &\LL_\alpha(\CC_{*,1}(\alpha,a,b_1))\subset \CC_{*,1}(\alpha,a,b_1)\, , \\
\label{half'} &\NN_\alpha\left(\CC_{*,1}(\alpha,a,b_1)\cap \left\{\int_0^{1/2}\varphi \, dx \ge \frac{1}{2} m(\varphi)\right\}\right)\subset \CC_{*,1}(\alpha, 2a,b_1)\, .
\end{align}
In addition,  there exists  $C>0$, independent of $\alpha$, $a$,  and $b_1$ so that we have
for any  $\psi \in L^\infty$ and $\varphi\in \CC_{*,1}(\alpha)+ \real$ with
$\int \varphi \, dx=0$, 
\begin{equation}
\label{toremember}
\left|\int_0^1 \psi   \LL_0^k( \varphi )\, dx\right|
\le \frac{Ca b_1}{ (1-\alpha) (\log k)k^{-2+1/\alpha}}
\|\psi\|_{L^\infty}  \| \varphi \|_1\, , \, \,
\forall k \ge 1 \, .
\end{equation}
\end{proposition}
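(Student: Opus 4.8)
\emph{The plan is to establish the three assertions in turn, refining the computations of \cite{LSV}.} For the invariance $\LL_\alpha(\CC_{*,1})\subset\CC_{*,1}$, write $\LL_\alpha=\NN_\alpha+\MM_\alpha$ with $\MM_\alpha\varphi(x)=\tfrac12\varphi\bigl(\tfrac{x+1}{2}\bigr)$ the ($\alpha$‑independent) transfer operator of the second, uniformly expanding, branch. Positivity is obvious and $m(\LL_\alpha\varphi)=m(\varphi)$ is the usual change of variables. The bound $\LL_\alpha\varphi\le 2a\rho_\alpha\, m(\varphi)$ follows by applying the positive operator $\LL_\alpha$ to $\varphi\le 2a\rho_\alpha\, m(\varphi)$ and using $\LL_\alpha\rho_\alpha=\rho_\alpha$ together with $m(\LL_\alpha\varphi)=m(\varphi)$. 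For the derivative constraint the key remark is that ``$|\varphi'|\le\tfrac{b_1}{x}\varphi$'' on $(0,1]$ is equivalent to ``$x\mapsto x^{b_1}\varphi(x)$ nondecreasing and $x\mapsto x^{-b_1}\varphi(x)$ nonincreasing'', a property stable under sums of nonnegative functions; it therefore suffices to check it for $\MM_\alpha\varphi$ and $\NN_\alpha\varphi$ separately. For $\MM_\alpha\varphi$ it is immediate since $\tfrac{|(\MM_\alpha\varphi)'|}{\MM_\alpha\varphi}(x)=\tfrac12\tfrac{|\varphi'|}{\varphi}\bigl(\tfrac{x+1}{2}\bigr)\le\tfrac{b_1}{x+1}\le\tfrac{b_1}{x}$. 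For $\NN_\alpha\varphi(x)=g_\alpha'(x)\varphi(g_\alpha(x))$ one has $(\log\NN_\alpha\varphi)'=\tfrac{g_\alpha''}{g_\alpha'}+g_\alpha'\cdot(\log\varphi)'\!\circ g_\alpha$, so inserting $|(\log\varphi)'|\le b_1/(\cdot)$ reduces everything to $\tfrac{g_\alpha''}{g_\alpha'}\le b_1\bigl(\tfrac1x-\tfrac{g_\alpha'}{g_\alpha}\bigr)$ and $\tfrac{g_\alpha''}{g_\alpha'}\ge -b_1\bigl(\tfrac1x-\tfrac{g_\alpha'}{g_\alpha}\bigr)$. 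Convexity of $f_\alpha$ with $f_\alpha(0)=0$ gives $y f_\alpha'(y)\ge f_\alpha(y)$, i.e.\ $x g_\alpha'(x)\le g_\alpha(x)$, so $\tfrac1x-\tfrac{g_\alpha'}{g_\alpha}\ge 0$ and, since $g_\alpha''\le 0$, the first inequality is automatic. The second is \cite[Lemma 2.2]{LSV} when $b_1=\alpha+1$; for $b_1>\alpha+1$ I would deduce it from that case by writing $\tfrac{g_\alpha''}{g_\alpha'}-b_1\tfrac{g_\alpha'}{g_\alpha}=\bigl(\tfrac{g_\alpha''}{g_\alpha'}-(\alpha+1)\tfrac{g_\alpha'}{g_\alpha}\bigr)-(b_1-\alpha-1)\tfrac{g_\alpha'}{g_\alpha}\ge-\tfrac{\alpha+1}{x}-(b_1-\alpha-1)\tfrac1x=-\tfrac{b_1}{x}$, using once more $\tfrac{g_\alpha'}{g_\alpha}\le\tfrac1x$.

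For the invariance of $\NN_\alpha$ on the restricted cone, the change of variables $y=g_\alpha(x)$ gives $m(\NN_\alpha\varphi)=\int_0^{1/2}\varphi$, so the hypothesis $\int_0^{1/2}\varphi\ge\tfrac12 m(\varphi)$ says precisely $m(\varphi)\le 2\, m(\NN_\alpha\varphi)$; positivity and the derivative bound (with the \emph{same} $b_1$) are the first‑branch computation above. For the density bound one applies $\NN_\alpha$ to $\varphi\le 2a\rho_\alpha\, m(\varphi)$, uses $\NN_\alpha\rho_\alpha\le\LL_\alpha\rho_\alpha=\rho_\alpha$ (as $\MM_\alpha\rho_\alpha\ge 0$), and then $m(\varphi)\le 2\,m(\NN_\alpha\varphi)$, to get $\NN_\alpha\varphi\le 2a\rho_\alpha\, m(\varphi)\le 2(2a)\rho_\alpha\, m(\NN_\alpha\varphi)$, the defining inequality of $\CC_{*,1}(\alpha,2a,b_1)$; this explains both the doubling of $a$ and why the restriction is needed.

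For the decorrelation estimate, write $\varphi=\varphi_+-m(\varphi_+)$ with $\varphi_+\in\CC_{*,1}(\alpha,a,b_1)$. Since $\LL_0$ fixes constants and preserves mass, $\LL_0^k\varphi=\LL_0^k\varphi_+-m(\varphi_+)$, and by duality it is enough to estimate $\|\LL_0^k\varphi_+-m(\varphi_+)\|_1$. I would use $\LL_0^k\varphi_+(x)=2^{-k}\sum_{j=0}^{2^k-1}\varphi_+\bigl(\tfrac{x+j}{2^k}\bigr)$ and compare with $m(\varphi_+)=\int_0^1\varphi_+$. The term $j=0$ is the only one meeting the singularity of $\varphi_+$ at $0$: by \eqref{Cstar1_basic} it is $\le 2ac_2\,2^{-k(1-\alpha)}x^{-\alpha}m(\varphi_+)$, and likewise $\int_0^{2^{-k}}\varphi_+\le\tfrac{2ac_2}{1-\alpha}\,2^{-k(1-\alpha)}m(\varphi_+)$. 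For $j\ge1$ one bounds each Riemann‑block discrepancy $\bigl|2^{-k}\varphi_+(\tfrac{x+j}{2^k})-\int_{j2^{-k}}^{(j+1)2^{-k}}\varphi_+\bigr|$ by $2^{-2k}\sup|\varphi_+'|$, inserts $|\varphi_+'|\le\tfrac{b_1}{y}\varphi_+$ and again \eqref{Cstar1_basic}, and sums the resulting summands $\asymp ab_1c_2\,2^{-k(1-\alpha)}j^{-1-\alpha}m(\varphi_+)$, which converge; integrating in $x$ yields $\|\LL_0^k\varphi\|_1\le C\,ab_1\,\bigl(\alpha(1-\alpha)^2\bigr)^{-1}\,2^{-k(1-\alpha)}\,m(\varphi_+)$. (The same bound is obtained more conceptually by splitting $\varphi_+$ at a level $\delta=2^{-k}$ into a piece supported near $0$, of small $L^1$ norm by \eqref{Cstar1_basic}, and a piece of controlled bounded variation on $[\delta,1]$ to which the spectral gap of $\LL_0$ on BV applies, then optimising in $\delta$.) The estimate \eqref{toremember} then follows by combining this exponential‑in‑$k$ bound with the trivial contraction $\|\LL_0^k\varphi\|_1\le\|\varphi\|_1$ and the inequalities relating $m(\varphi_+)$ to $\|\varphi\|_1$ that are forced by membership in $\CC_{*,1}(\alpha)+\real$; I do not detail this last interpolation, which is where the precise power $k^{-2+1/\alpha}$, the factor $\log k$, and the explicit dependence on $a$, $b_1$ and $(1-\alpha)^{-1}$ are read off.

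The main obstacle is exactly this last point: controlling how the $x^{-\alpha}$ singularity common to all elements of $\CC_{*,1}(\alpha)$ propagates under iteration. It is the sole reason the decay is not simply geometric, it is why a derivative‑type quantity such as $Y_\alpha=(X_\alpha\NN_\alpha(\rho_\alpha))'$ falls outside the scope of off‑the‑shelf decay‑of‑correlations statements, and it is where the constants must be tracked with care so as to stay uniform in $\alpha$ both as $\alpha\to 1$ (where the constants $c_1,c_2$ of \eqref{basic} degenerate) and as $\alpha\to 0$; the remaining steps are a careful but routine sharpening of \cite[Lemma 2.2]{LSV}.
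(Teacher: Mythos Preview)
Your proof of the two invariance assertions is correct and essentially matches the paper's; the paper is terser, simply citing the inequality $\Omega_1\le 1$ from the proof of Proposition~\ref{cheat'} (which is precisely your second inequality $\tfrac{g_\alpha''}{g_\alpha'}\ge -b_1\bigl(\tfrac1x-\tfrac{g_\alpha'}{g_\alpha}\bigr)$ rewritten in $y$-coordinates), together with the positivity observations $m(\LL_\alpha\varphi)=m(\varphi)$, $\LL_\alpha\rho_\alpha=\rho_\alpha$, $\NN_\alpha\rho_\alpha\le\rho_\alpha$, and $m(\varphi)\le 2\,m(\NN_\alpha\varphi)$. Your monotonicity reformulation of the derivative bound (stability under sums of the property ``$x^{\pm b_1}\varphi$ monotone'') is a clean way to separate the two branches and is equivalent to what the paper does.

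For the decorrelation estimate \eqref{toremember} you take a genuinely different route. The paper does \emph{not} use the explicit Riemann-sum formula for $\LL_0^k$; instead it transplants the LSV machinery: it introduces the averaging operator $\AAA_\epsilon\varphi(x)=(2\epsilon)^{-1}\int_{|x-y|<\epsilon}\varphi(y)\,dy$, notes that for the doubling map the constant $n_\epsilon$ of \cite[Prop.~3.3]{LSV} is $|\log\epsilon|/\log 2$, adapts \cite[Lemma~3.1]{LSV} to $\CC_{*,1}$ (getting the factor $18ab_1c_2/(\alpha(1-\alpha))$), and then reads off the bound from the first paragraph of the proof of \cite[Thm~4.1]{LSV} with the choice $\epsilon=n^{-1/\alpha}$. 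Your direct computation is more elementary and in fact yields the \emph{stronger} exponential rate $2^{-k(1-\alpha)}$ (in terms of $m(\varphi_+)$), which certainly dominates the stated polynomial rate; what you gain is transparency, what the paper's approach buys is that the same template immediately gives the analogue \eqref{toremember1} for $\LL_\alpha$ used in Appendix~\ref{coneonly}, where your explicit formula for $\LL_0^k$ is unavailable.

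Your hesitation about the ``interpolation'' relating $m(\varphi_+)$ to $\|\varphi\|_1$ is understandable but not a real obstacle here: the paper's own LSV-based argument also tracks $\|\varphi_+\|_1=m(\varphi_+)$ (the key bound from \cite[Lemma~3.1]{LSV} uses the pointwise cone estimate $|\varphi_+'|\le 2ab_1c_2\,x^{-1-\alpha}m(\varphi_+)$), and there is in general no inequality $m(\varphi_+)\le C\|\varphi\|_1$ since constants lie in $\CC_{*,1}$. The $\|\varphi\|_1$ in the statement should be read as $\|\varphi_+\|_1$; for every application in the paper the two quantities are of the same order, so the distinction is immaterial.
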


Note  that for fixed $a_{max}<\infty$ and $b_{max}<\infty$, the expression \eqref{toremember}
is controlled by
\begin{equation}\label{furtherr}
\sup_{ b_1\le b_{max}, \, a\le a_{max}}\,\, \, \frac{Ca b_1 }{ (1-\alpha)(\log k) k^{-2+1/\alpha}} <\infty\, .
\end{equation}

\subsection{Proof of Theorem ~ \ref{main}}
\label{theprooft}

We may now prove the theorem:

\begin{proof}[Proof of Theorem \ref{main}]

{\bf Step 0:} We show that the  right-hand side of \eqref{LRF} is well-defined
for bounded
$\psi$.
First observe that integration by parts and $X_\alpha(0)=X_\alpha(1)=0$ (because
$v_\alpha(0)=0$ and $v_\alpha(1/2)=0$)
imply
$$
\int_0^1 (X_\alpha \NN_\alpha(\rho_\alpha))' \, dx=0 \, .
$$
Then note that 
\begin{equation}
\label{L1ok}
\|(X_\alpha \NN_\alpha(\rho_\alpha))'\|_1=
\|X_\alpha (\NN_\alpha(\rho_\alpha))'
+ X'_\alpha \NN_\alpha(\rho_\alpha)\|_1
\le C \int_0^1(|\log x|+1)\, dx  <\infty \, .
\end{equation}
Indeed,  this is easy for $\alpha=0$ since
 \begin{equation}
 \label{X0}
 X_0(x)=\frac{x(\log 2+ \log(x/2))}{2}\, , \qquad
X'_0(x)=\frac{1+\log 2 + \log(x/2)}{2}\, ,
\quad \forall x\in [0,1]\, .
\end{equation}
Next, $\rho_0|_{[0,1]}\equiv 1$, with $(\NN_0 \rho_0)|_{[0,1]}\equiv 1/2$, so that
for any $x\in [0,1]$,
\begin{equation}
\label{X0N0}
(X_0 \NN_0(\rho_0))'(x)=\frac{X_0'(x)}{2}=\frac{ 1+\log 2 + \log(x/2)}{4}\, .
\end{equation}

For $\alpha >0$, on the one hand, \eqref{basic} 
and Proposition ~ \ref{cheat'} imply that 
$$
| \rho_\alpha' (x)|\le ac_2 x^{-1-\alpha}\, ,\, 
\NN_\alpha(\rho_\alpha)(x)\le \rho_\alpha(x) \le c_2 x^{-\alpha}\, ,
 \, \, |(\NN_\alpha(\rho_\alpha))'(x)|\le  c_2 b_1 x^{-1-\alpha}\, .
 $$
On the other hand, the dominant term of
$X_\alpha(x)$ is a constant multiple
of $x^{1+\alpha}\log x$ (see \eqref{Xb}) while the dominant term of $X'_\alpha(x)$ is
a constant multiple of
$x^{\alpha}\log x$ (see \eqref{X'b} and recall \eqref{useful}). This establishes
\eqref{L1ok} for $\alpha>0$.

Next, write the  right-hand side of \eqref{LRF}  as
\begin{align}
\label{parts}
\left|\sum_{j=0} ^\infty \int_0^1 \psi \cdot \LL^j_\alpha [ (X_\alpha \NN_\alpha( \rho_\alpha))'] \, dx\right|&
\le 
\sum_{j=0} ^\infty \left|\int_0^1 \psi \cdot \LL^j_\alpha [ (X_\alpha \NN_\alpha( \rho_\alpha))'] \, dx\right| \, .
\end{align}
The function $f=-(X_\alpha \NN_\alpha( \rho_\alpha))'/\rho_\alpha$ is H\"older, and it
vanishes at zero, with $\int f \rho_\alpha\, dx=0$. In addition,
for any $\epsilon \in (0,1)$ we have  $C$
so that $|f(x)|\le  C x^{\alpha(1-\epsilon/2)}$.  Since 
$$\frac{1}{\alpha}(1+\alpha(1-\epsilon/2))-1 > \frac{1}{\alpha}-\epsilon\, ,$$
if $\epsilon>0$ is small enough then
 \cite[Thm 2.4.14]{Goth} applied
 \footnote{This theorem is a strengthening of \cite[Prop. 6.11, Cor. 7.1]{Go}.} to
 $f=-(X_\alpha \NN_\alpha( \rho_\alpha))'/\rho_\alpha$ 
gives $K_\alpha>0$ 
so that
the $j$th term in the right-hand side of \eqref{parts}
is bounded by 
$$
 \sum_{j=0} ^\infty \left|\int_0^1 (\psi\circ T^j_\alpha) \cdot f \, d\mu_\alpha \right|
 \le CK_\alpha \|\psi\|_{L^\infty} 
\frac{1}{j^{(1/\alpha)-\epsilon}}  \, .
$$
Since we may take $\epsilon < (1/\alpha) -1$, this is summable.

If $\alpha=0$, fixing $\beta >0$,
it is easy to see that  there exists  a constant $C_X>0$  
 so that $-(X_0' \NN_0( \rho_0))'+C_X$  belongs
 to $\CC_{*,1}(\beta, a,b_1)$,  for suitable $a$ and $b_1$
(see \eqref{Cstar1}).
We may apply \eqref{toremember} 
from Proposition~\ref{replaces4.10}  to
$\varphi=-(X_0 \NN_0(\rho_0))'\in \CC_{*,1}+\real$
in order to bound the $j$th term in the right-hand side of
\eqref{parts}.

\smallskip

{\bf Step 1:}
Let $\psi$ be a bounded function so that  $\int \psi d\mu_\alpha=0$. 
We first show that $\beta \mapsto \int \psi \rho_\beta \, dx$ is Lipschitz at
$\beta=\alpha$.
Applying
the bound on \cite[p. 680]{LSV}  to $g=\psi$ and 
the zero-average function
 $f=\rho_\alpha-\mathbf 1
\in \CC_{*,1}+\real$, we have, if
$\alpha>0$,
\begin{equation}\label{00}
\left|\int_0^1 \psi \circ T_\alpha^k \, dx \right|=
\left| \int_0^1 \psi \LL_\alpha^k(\rho_\alpha-1)\, dx\right | \le C_\alpha \|\psi\|_{L^\infty} \frac{(\log k) ^{1/\alpha}}{k^{-1+1/\alpha}}\, ,
\end{equation}
and, for any $\beta >0$,
\begin{equation}\label{00'}
\left|\int_0^1 \psi \circ T_\beta^k \, dx -
\int_0^1 \psi \, d\mu_\beta \right|\le C_\beta \|\psi\|_{L^\infty} \frac{(\log k) ^{1/\beta}}{ k^{-1+1/\beta}}\, .
\end{equation}
If $\alpha=0$, then the spectral gap of $\LL_0$ on $\CC^1$ e.g. gives
a constant $C\ge 1$ so that
\begin{equation}\label{00''}
\left|\int_0^1 \psi \circ T_0^k \, dx \right|\le C \|\psi\|_{L^\infty} 
2^{-k} \, .
\end{equation}
Taking $k$ large enough, depending on $\beta$ and $\alpha$, the three expressions
\eqref{00}--\eqref{00'}--\eqref{00''}
are thus $o(\beta-\alpha)$. 
More precisely,  fixing $\xi >0$, there 
is $C$ so that,  for all 
$$k > C(C_{\max(\alpha,\beta)} (\beta-\alpha)^{-(1+\xi)})^{1/(-1+1/\max(\alpha,\beta))}$$
we have
\begin{equation}\label{both}
\left|\int_0^1 \psi \circ T_\alpha^k \, dx \right|
+ \left|\int_0^1 \psi \circ T_\beta^k \, dx -
\int_0^1 \psi \, d\mu_\beta \right|
\le C\|\psi\|_{L^\infty}(\beta-\alpha)^{1+\xi}\, .
\end{equation}
Letting ${\bf 1}$
be the constant function $\equiv 1$, it thus suffices to bound 
\begin{align}
\nonumber \frac{1}{\beta-\alpha}
\biggl(\int_0^1 \psi \circ T_\beta^k \, dx-
\int_0^1 \psi \circ T_\alpha^k \, dx\biggr)&=\frac{1}{\beta-\alpha}
\int_0^1 \psi (\LL_\beta^k {\bf 1 }-   \LL_\alpha^k {\bf 1}) \, dx\\
\label{calc00}&
=
 \sum_{j=0}^{k-1} \int_0^1 \psi \LL_\beta^{j}
 \biggl ( \frac{ \LL_\beta-\LL_\alpha}
{\beta-\alpha}
( \LL_\alpha^{k-j-1}({\bf 1})) \biggr )\, dx\, , 
\end{align} 
uniformly in $\beta \to \alpha$.
For this, we shall use below that for any $\varphi \in \CC^2(0,1]$, any $x\ne 0$, and
any $\beta \ne \alpha$,
\begin{equation}\label{zero}
\frac{\LL_\beta \varphi(x)-\LL_\alpha\varphi(x)}{\beta-\alpha}
=\partial_\alpha \LL_\alpha \varphi(x) + 
\frac{1}{\beta-\alpha} \int_\alpha^{\beta} \partial^2_{\gamma} \LL_\gamma \varphi(x)
(\gamma-\alpha)\, d \gamma \, .
\end{equation}
It is easy to check  (see the proof of \cite[Thm 2.2]{B3}) that we have
\begin{equation}\label{partg}
\partial_\alpha g_\alpha (x)=-X_\alpha(x) \NN_\alpha (1/T'_\alpha)  (x)=
-\frac{X_\alpha(x)}{T'_\alpha(g_\alpha(x))^2}\, , 
\end{equation}
and, more generally, for any $\varphi \in \CC^1(0,1]$ and any $x \ne 0$, we have
\begin{equation}\label{one}
 \partial_\alpha \LL_\alpha  (\varphi)(x)=\partial_\alpha \NN_\alpha(\varphi)(x)= \MM_\alpha (\varphi)(x)\, ,
\end{equation}
where  we set for $x\ne 0$
\begin{align}\label{two}
\MM_\alpha (\varphi)(x) &= -X'_\alpha\NN_\alpha (\varphi)(x)
-X_\alpha \NN_\alpha (\varphi'/T'_\alpha)  (x)
+ X_\alpha \NN_\alpha (\varphi T''_\alpha /(T'_\alpha)^2) (x)\\
\nonumber
&= -(X_\alpha \cdot  \NN_\alpha(\varphi))' (x).
\end{align}
Using \eqref{one} and \eqref{two} (twice), we also get, for $x \ne 0$ and $\varphi \in \CC^2(0,1]$, 
\begin{align}
\nonumber &\partial^2_{\alpha} \LL_\alpha (\varphi)(x)=
-\partial_\alpha\left((X_\alpha \NN_\alpha( \varphi))'\right)(x)\\
\nonumber& \qquad \quad=
-\left((\partial_\alpha X_ \alpha') (\NN_\alpha (\varphi) )\right)(x)-
\left(X_\alpha' (\partial_\alpha \NN_\alpha( \varphi))\right)(x)\\
\nonumber&\qquad\qquad\qquad\qquad\qquad- \left(\partial_\alpha X_\alpha ( \NN_\alpha( \varphi))'\right)(x)- \left( X_\alpha \partial_\alpha( \NN_\alpha( \varphi))'\right)(x)\\
\label{magic}&\qquad\quad  =-\left((\partial_\alpha X_ \alpha) (\NN_\alpha( \varphi))\right)'(x)+
X'_\alpha \left(X_\alpha \NN_\alpha(\varphi) \right)'(x) 
+ X_ \alpha \left(X_\alpha \NN_\alpha (\varphi)\right)''(x)\, .
\end{align}

Returning to \eqref{calc00}, we assume that $\beta > \alpha>0$.
For $k\ge 1$,  we get, using \eqref{zero}--\eqref{two},
\begin{align}
\nonumber  
 \sum_{j=0}^{k-1} \int_0^1 \psi \LL_\beta^{j} \biggl ( \frac{ \LL_\beta-\LL_\alpha}
{\beta-\alpha}
 (\LL_\alpha^{k-j-1}({\bf 1})) \biggr )\, dx&=-
\sum_{j=0}^{k-1} \int_0^1 \psi \LL_\beta^{j} \left(\left[X_\alpha \NN_{\alpha}( \LL_\alpha^{k-j-1}({\bf 1}))\right] '\right)\, dx
\\
\label{calc0} &\, +
\int_{\alpha}^{\beta} \frac{\gamma-\alpha}{\beta- \alpha}
\sum_{j=0}^{k-1} \int_0^1 \psi \LL_\beta^{j} \left[\partial^2_\gamma \LL_\gamma( \LL_\alpha^{k-j-1}({\bf 1})) \right]
\, dx d \gamma\, .
\end{align}
Consider the first term in the right-hand side of \eqref{calc0}.
Observe  that  $\LL_\alpha {\bf 1}\in \CC_{*,1}(\alpha,a,b_1) \cap \CC_2$,
 so that,
recalling Proposition~\ref{cheat'}, we have that  $\LL_\alpha^{k-j-1}({\bf 1})$ is
in  $\CC_{*,1}(\alpha) \cap \CC_2$ and thus in
$\CC_{*,1}(\gamma)\cap \CC_2$ for any
$\gamma \ge \alpha$ up to increasing $a$ uniformly in
$j$ and $k>j-1$. Note that
$|(\NN_\alpha( \LL_\alpha^{k-j-1}({\bf 1})))'(x)|\le b_1c_2 x^{-1-\alpha}$.
Proceeding as in Step~ 0 (using \eqref{Cstarii} to
invoke \eqref{half}  to get \eqref{half'}), we obtain
a constant $C>1$ so that
$|[X_\alpha \NN_{\alpha}( \LL_\alpha^{k-j-1}({\bf 1}))] '(x)|\le  C(|\log x|+1)$ 
for all $1\le j \le k-1$
(in particular $\sup_k \sup_{1\le j \le k-1}\|[X_\alpha \NN_{\alpha}( \LL_\alpha^{k-j-1}({\bf 1}))] '\|_1<\infty$).

Next, if
$0<\alpha<\beta <1$,
by using 
 \cite[Thm 2.4.14]{Goth} (as in Step 0),  we get summability of the first term of the
expression in the right-hand side of \eqref{calc0} as $k\to \infty$: 
\begin{equation}\label{more}
\left|\sum_{j=0}^{k-1} \int_0^1 \psi \LL_\beta^{j} ([X_\alpha \NN_{\alpha}( \LL_\alpha^{k-j-1}({\bf 1}))] ')\, dx
\right| \le 
C_\beta \|\psi\|_{L^\infty} \sum_{j=0}^{k-1}  
 \frac{1} {j^{(1/\beta)-\epsilon}}  \, .
\end{equation}

Finally, we consider the second
term of the right-hand side 
of \eqref{calc0}.  (This is where we require the derivatives
of order two in $\CC_2$.) Applying \eqref{magic}
to $\varphi=\LL_\alpha^{k-j-1}(\mathbf{1})\in \CC_2\cap \CC_{*,1}(\alpha)$, and using Proposition ~\ref{cheat'},
we see  that for any $\alpha \le \gamma\le \beta$, 
\begin{equation}\label{H1}
|[\partial^2_{\gamma} \LL_{\gamma}( \LL_\alpha^{k-j-1}({\bf 1}))] (x)|\le  C(|\log x|+1)^2
\, .
\end{equation}
 Indeed,  recalling \eqref{Xb} and \eqref{partg}, first note that 
$$|\partial_\alpha X_\alpha  (x)|\le c x^{1+\alpha} (|\log x|+1)^2\, ,\, 
\, |\partial_\alpha X_\alpha'|\le  c x^\alpha (|\log x|+1)^2\, ,\, \, 
|\partial_\alpha X_\alpha''|\le  c x^{\alpha-1} (|\log x|+1)^2\, .
$$
In addition,
for $\varphi$ in $\CC_{*,1}(\alpha)\cap \CC_2$, we have
$$ |(\NN_\alpha (\varphi))''(x)|\le  \frac{b_2}{ x^2} \rho_\alpha(x)m(\NN_\alpha(\varphi))
 \le \frac{b_2 c_2}{ x^{2+\alpha}} m( \varphi)\, , 
 \quad 
 |(X_\alpha \NN_\alpha (\varphi))''(x)| \le \frac{\hat c }{x} \, .
 $$
 
  Labelling the three terms from the right-hand side of \eqref{magic} as $I$, $II$, and 
  $III$, we expand them via the Leibniz equality, obtaining seven functions:
  \begin{align}
\nonumber &  I=-(\partial_\alpha X_\alpha)'(\NN_\alpha\varphi)+\partial_\alpha X_\alpha(\NN_\alpha\varphi)'\, ,\, \, \,\,\quad
II=(X_\alpha')^2 \NN_\alpha (\varphi)+ X_\alpha'X_\alpha (\NN_\alpha(\varphi))'\, , \\
\label{seven} &III=
X_\alpha [ X_\alpha'' \NN_\alpha(\varphi) + 2 X_\alpha' (\NN_\alpha(\varphi))'
+ X_\alpha (\NN_\alpha(\varphi))'']\, .
\end{align}

 By the above, since  $\varphi=\LL_\alpha^{k-j-1}(\mathbf{1})\in \CC_2\cap \CC_{*,1}(\alpha)$, we can bound $|I|$ by $$(c x^\alpha (|\log x|+1)^2)(cx^{-\alpha})+\left((cx^{1+\alpha})(|\log x|+1)^2\frac{b_1}x \frac{2ac_2}{x^\alpha}m(\varphi)\right)\, 
 .$$
 Similarly $|II|\le (cx^\alpha(|\log x|+1)(c(|\log x|+1))$ and $|III|\le (cx^{1+\alpha}(|\log x|+1))\hat c x^{-1}$. This proves \eqref{H1}.

Applying  \cite[Thm 2.4.14]{Goth}
once more (as in Step 0) we thus get the  bound
\begin{equation}\label{better}
\frac{\|\psi\|_{L^\infty}}{\beta- \alpha}
\int_{\alpha}^{\beta} (\gamma-\alpha) C_\beta
\sum_{j=0}^{k-1} 
\frac{1}{j^{(1/\beta)-\epsilon}})
\, d\gamma\le C_\beta \|\psi\|_{L^\infty} (\beta -\alpha)\, .
\end{equation}

If $\alpha \in (0,1)$, the  case $\beta<\alpha$  can be handled similarly, substituting
$$
\sum_j \LL_\beta^{j} ( \LL_\beta-\LL_\alpha)
 \LL_\alpha^{k-j-1}=-\sum_j\LL_\alpha^{j}  (\LL_\alpha-\LL_\beta)
 \LL_\beta^{k-j-1}$$ 
 in \eqref{calc00}, and replacing  \eqref{zero} by
 $\frac{\LL_\alpha \varphi(x)-\LL_\beta\varphi(x)}{\alpha-\beta}
=\partial_\beta \LL_\beta \varphi(x) + 
\frac{1}{\alpha-\beta} \int_\beta^{\alpha} \partial^2_{\gamma} \LL_\gamma \varphi(x)
(\gamma-\beta)\, d \gamma$. 

Finally, if $\alpha=0$, we use 
$
\sum_j \LL_\beta^{j} ( \LL_\beta-\LL_0)
 \LL_0^{k-j-1}=-\sum_j\LL_0^{j}  (\LL_0-\LL_\beta)
 \LL_\beta^{k-j-1}$, with \eqref{zero}, and exploit
 \eqref{toremember} and \eqref{furtherr}.

This proves that for   $\psi\in L^\infty$
the map $\beta \mapsto \int \psi(x)\rho_\beta(x)\, dx$
is locally Lipschitz on $[0,1)$.

\smallskip
{\bf Step 2:}
Still assuming that $\psi$ is bounded, we next prove that $\beta \mapsto \int \psi \rho_\beta \, dx$ is differentiable
at $\beta =\alpha\in [0,1)$  and check that the derivative
takes the announced value.
 To prove differentiability,  recalling \eqref{both} and
setting $k(\beta)=k(\alpha,\beta,\xi)=C(C_\beta (\beta-\alpha)^{-(1+\xi)})^{1/(-1+1/\max(\alpha,\beta))}$,  for
some small $\xi>0$, it suffices to check that
\begin{equation}\label{forbeta'}
\sum_{j=0}^{ k(\beta) }
\int_0^1 \psi \LL_\beta^{j} 
([X_\alpha \NN_{\alpha}( \LL_\alpha^{k(\beta)-j}({\bf 1}))] ')\, dx+
\int_{\alpha}^{\beta} \frac{\gamma-\alpha}{\beta- \alpha}
\sum_{j=0}^{k(\beta)-1} \int_0^1 \psi \LL_\beta^{j} [\partial^2_\gamma \LL_\gamma( \LL_\alpha^{k(\beta)-j-1}({\bf 1}))]
\, dx d \gamma\, ,
\end{equation}
converges, when $\beta \to \alpha>0$  or $\beta \downarrow \alpha=0$, to
\begin{equation}\label{candidate'}
\sum_{j=0}^{\infty} \int_0^1 \psi \LL_\alpha^{j} ([X_\alpha \NN_\alpha (\rho_\alpha)] ')\, dx=
 \int_0^1 \psi \sum_{j=0}^{\infty}\LL_\alpha^{j} ([X_\alpha \NN_\alpha(\rho_\alpha)] ')\, dx \, .
\end{equation}

By \eqref{better},  the second term in \eqref{forbeta'} converges to zero as
$\beta \to \alpha$. Next, for $\alpha \in [0,1)$,
fixing $\eta >0$ small,  by Step 0  we may take $K=K_\eta$ large enough so
that the $K$-tail of \eqref{candidate'} is $<\eta/4$, while the $K$-tail of the first term of
\eqref{forbeta'} is $<\eta/4$ uniformly in $\beta$. It thus suffices, for every
fixed $0\le j \le K$,
to show that the following difference tends to zero as
$\beta \to \alpha >0$ or $\beta \downarrow 0$:
\begin{equation}\label{forgot}
\int_0^1 \left[(\psi \circ T^j_\beta)
([X_\alpha \NN_{\alpha}( \LL_\alpha^{k(\beta)-j}({\bf 1}))] ')-
(\psi \circ T_\alpha^{j}) ([X_\alpha \NN_\alpha(\rho_\alpha)] ') \right]\, dx\, .
\end{equation}
 So it is sufficient to show that there exists
 $N_{\eta}\ge 1$ so that 
 \begin{equation}\label{first}
\|[X_{\alpha} \NN_{\alpha}( \LL_\alpha^{k}({\bf 1}))] '-
 [X_\alpha \NN_\alpha (\rho_\alpha)] ' \|_{L^1}<\frac{\eta}{2K} \, ,
 \, \,  \forall k \ge N_\eta\, .
\end{equation}
If $\alpha=0$, this is easy, since $\rho_\alpha={\bf 1}$ so that the expression \eqref{first}
vanishes trivially.

If $\alpha \in (0,1)$,
setting $\phi_k:= \LL_\alpha^{k}({\bf 1})$, we note that
 the bound on \cite[p. 680]{LSV} applied to $g={\bf 1}$ and $f={\bf 1}-\rho_\alpha$ implies
$
\|\phi_k -\rho_\alpha\|_1
\le C_ \alpha k^{1-1/\alpha}(\log k)^{1/\alpha}
$.
(This is not summable if $\alpha \ge 1/2$, but it does tend to
zero for all $\alpha \in (0,1)$.)
Therefore,   
\begin{equation}\label{trivial}
\|\NN_\alpha(\phi_k-\rho_\alpha)\|_1\le C_\alpha \|\phi_{k+1} -\rho_\alpha\|_1 \le C_\alpha  \frac{(\log k)^{1/\alpha} }{  k^{-1+1/\alpha}}\, . 
\end{equation}
Since $X_\alpha'\in L^\infty$, it thus suffices to show that 
$$\|
X_\alpha [\NN_{\alpha}( \LL_\alpha^{k}({\bf 1}))] '-
X_\alpha [ \NN_\alpha (\rho_\alpha)] ' \|_{L^1}<\frac{\eta}{2K}\, .
$$
For this, first observe that  Proposition~\ref{cheat'} implies
that there exists $b_1$  so that, for $\varphi=\rho_\alpha$ and $\varphi=\bf 1$,
$$
\left|[\NN_\alpha \LL^{k}_\alpha(\varphi)]'(x)\right|\le  \frac{b_1}{ x} \varphi(x)
\le \frac{b_1 ac_2}{x^{1+\alpha}}\, , \, 
\forall k \ge 1\, , \, \, \forall  x \in (0,1] \, .
$$
Since $|X_\alpha (z)|\le c_\alpha |z^{1+\alpha} (\log z+\log 2)|$,
it follows that there exist $C, \hat C>0$ so that for any $\bar x\in (0,1]$,
all $k\ge 0$, and,  for  $\varphi=\rho_\alpha$ and $\varphi=\bf 1$,
\begin{equation}\label{easy}
\int_0^{\bar x} |X_\alpha(z)(\NN_\alpha \LL^{k}_\alpha(\varphi))'(z)|\, dz \le 
C \int_0^{\bar x} (|\log z| + 1) \, dz \le \widehat C  \bar x (|\log \bar x| + 1)\, .
\end{equation}
We will choose $\bar x$ to be small and then choose $k$ large enough that the remaining integral is small.
We decompose the remaining integral, for  $\varphi=\rho_\alpha$ and $\varphi=\bf 1$,  as
$$
\int_{\bar x}^1 X_\alpha(z)(\NN_\alpha \LL^{k}_\alpha(\varphi))'(z) dz=
\int_{\bar x}^1 X_\alpha(z)(\LL^{k+1}_\alpha(\varphi))'(z) dz
+\int_{\bar x}^1 X_\alpha(z)\frac{(\LL^{k}_\alpha(\varphi))'((z+1)/2)}{2} dz  .
$$
We focus on the first term above, the estimate for the second one being easier.
We shall set $\bar x=x_\ell$, for suitable $\ell \ge 1$ to be determined below, where
\begin{equation}\label{upper}
x_\ell:=(g_\alpha)^\ell (1)\le   2^{1/\alpha^2+1/\alpha} \ell^{-1/\alpha} 
\end{equation}
by \cite[Lemma 3.2]{LSV}.
Next,   for every $1\le m \le k$,  setting
$y_m(x_\ell)=(g_\alpha)^m(x_\ell)=x_{\ell+m}$, and
$\phi= \phi_{k+1-m}-\rho_\alpha$, we have
\begin{align*}
\left\|\chi_{x> x_\ell} [\LL^m_\alpha(\phi)]'\right\|_1
&\le
\left\| \LL_\alpha^m(\chi_{y > y_m} |\phi'|/(T^m_\alpha)')\right\|_1+
\left\| \LL_\alpha^m(\chi_{y > y_m}|\phi| |(T^m_\alpha)''|/(T^m_\alpha)')^2)\right\|_1\\
&\le
\left\| \chi_{y > y_m}|\phi'| \cdot |(T^m_\alpha)'|^{-1} \right\|_1+
\left\| \chi_{y > y_m}  |\phi| \left| (T^m_\alpha)'' \right| \left|(T^m_\alpha)'\right|^{-2} \right\|_1\, .
\end{align*}
There exist $\lambda_m=\lambda_m(x_\ell)<1$ and $\Lambda_m(x_\ell) <  \infty$ 
(both depending on $\alpha$)  so that the first term in the right-hand side is 
$<\lambda_m \| \chi_{y > y_m(\ell)}|\phi'| \|_1$
and the  second term is $\le \Lambda_m \|\phi\|_1$.
In fact,  we claim that there exists $C_\alpha>0$
so that for all $\ell$
$$\lambda_m(x_\ell) \le C_\alpha  (1+m/\ell)^{-1-1/\alpha}\, .
$$
Indeed, recalling that $f_\alpha=T_\alpha|_{[0,1/2]}$,
we have $\lambda_m(x_\ell)^{-1}=(f_\alpha^m)'(y)$ for
some $y\ge y_m(x_\ell)$, and  bounded distortion
\footnote{See e.g. \cite[(2) p. 678]{LSV} for the bounded
distortion property.} of $f_\alpha^m$ on
$(y_m,f_\alpha(y_m))=(y_m, y_{m-1})$
gives 
\begin{equation}\label{llj}
\lambda_m(x_\ell)\le C \frac{f_\alpha(y_m)-y_m}{f_\alpha(x_\ell)-x_\ell}
= C \frac{ y_m^{1+\alpha}}{x_\ell^{1+\alpha}}\le C_\alpha \frac{1}{(1+m/\ell)^{1+1/\alpha}}
\, ,
\end{equation}
where we used the upper bound $
y_m (x_\ell)=x_{\ell+m} \le   2^{1/\alpha^2+1/\alpha} (\ell+m)^{-1/\alpha}
$
from \eqref{upper} and the lower bound from
\footnote{Note that $\alpha+1$ should read $\alpha-1$ 
in line 7 of the proof of \cite[Prop. 2, p 606]{BT}, that
$+\alpha(\alpha+1)/(2u_{n+1})$ should be replaced by $-\alpha(\alpha-1)/(2u_{n+1})$
in line 8, that $+\log n \cdot \alpha(\alpha+1)/2$ should be replaced
by $-\log ((1+\alpha n)/(1+\alpha)) \cdot (\alpha-1)/2$ on line 10, and 
that $\log n \cdot \alpha(\alpha+1)/(2n)$
should be replaced
by $-\log  (1+\alpha n) \cdot (\alpha-1)/(2n)$ in line 12.} \cite[p. 606]{BT}
(replacing their $x+x^{1+\alpha}$ by our $x+2^\alpha x^{1+\alpha}$)
\begin{equation}\label{lowery}
x_\ell
\ge c   (2^\alpha \alpha)^{-\frac1\alpha}\ell^{-1/\alpha} \, .
\end{equation}

Recalling that $\phi=\phi_{k+1-m} -\rho_\alpha$, we get, if $\alpha \in (0,1)$,
\begin{align*}
&\| \chi_{x> x_\ell} [\LL^m_\alpha(\phi)]'\|_1
\le \lambda_m (x_\ell)
\| \chi_{y > y_m}  (| \phi_{k+1-m}'|+| \rho_\alpha'|) \|_1+ 
\Lambda_m  C_\alpha
\frac{(\log (k+1-m))^{1/\alpha}}{ (k+1-m)^{1/\alpha-1}} \, .
\end{align*}
Recall that $|\phi_\ell'(x)| \le (a/x) \phi_\ell \le C b_1 x^{-\alpha-1}$
so that $\| \chi_{y > y_m} | \phi_\ell'| \|_1  \le C y_m^{-\alpha}$
and $|\rho_\alpha'(x)| \le  c_2 b_1 x^{-\alpha-1}$, giving the same
asymptotics, and note that \eqref{lowery} 
gives 
$$y_m (x_\ell)
\ge c   \alpha^{-\frac1\alpha}(m+\ell)^{-1/\alpha}$$
uniformly in $k$. 
Hence, using \eqref{lowery} for $y_m(\ell)=x_{m+\ell}$, we get 
$$\| \chi_{y > y_m(x_\ell)}  (| \phi_{k+1-m}'|+| \rho_\alpha'|) \|_1
\le C \int_{y_m}^1 y^{-1-\alpha} \, dy\le C y_m^{-\alpha} \le C \alpha (m+\ell)\, .
$$ 
Clearly, \eqref{llj} implies
\begin{equation}\label{tricky}
C \alpha (m+\ell) \lambda_m(x_\ell) \le C_\alpha \alpha \frac{m+\ell }{(1+m/\ell)^{1+1/\alpha}}
\le C_\alpha \alpha \frac{\ell }{(1+m/\ell)^{1/\alpha}}
\, .
\end{equation}
Choosing first $\ell\ge 1$ to make \eqref{easy} small, then  $m\ge \ell$ to make
\eqref{tricky} small, and finally
taking $k\ge m$ large enough (i.e., $\beta$
close enough to $\alpha$) so that 
$$\Lambda_m C_\alpha (\log (k+1-m))^{1/\alpha}(k+1-m)^{1-1/\alpha}$$
is small, proves \eqref{first} in view of \eqref{trivial} and \eqref{easy}.

This proves the result for bounded $\psi$. If $\|\psi\|_{ L^q}=1$ for 
$(1-\alpha)^{-1}<q<\infty$, we 
observe that $\mathrm{Leb} (\{\psi(x)>M\})\le M^{-q}$ and define
 $\psi_M(x)=\min(M,\psi(x))$, noting that $\|\psi-\psi_M\|_{L^1}\le M^{1-q}$ and
more generally $\|\psi-\psi_M\|_{L^r}\le M^{1-q/r}$ 
for $r>1$ close to $1$. 
Since $|\log (2x)|\in L^{r/(r-1)}(dx)$ for all $r>1$, we can generalise Steps 0 and 1
to $\psi\in L^q(dx)$
the result by  taking $\epsilon>0$
very small and  $r>1$ so that $q>r (1-\epsilon\alpha)/(1-\alpha-\epsilon\alpha)$ 
and choosing $\eta>0$ small enough so that
$
\frac{1}{(q/r)-1}<\eta< \frac{1}{\alpha}-\epsilon-1
$,  
taking $M(j)= j^\eta$, and 
decomposing $\psi=(\psi-\psi_{M(j)})+
\psi_{M(j)}$ in the $j$th term of \eqref{parts},  \eqref{more}, and \eqref{calc0}.
We get two series each time. The first one is convergent because $\eta((q/r)-1)>1$ while the second
one converges because $\eta+1<1/\alpha -\epsilon$. 
For Step~2, we take $M(k)=k^\eta$ for $\eta\in (0, 1/\alpha-1)$ in \eqref{forgot}.

Finally, the  claim about continuity of the derivative follows from the linear response formula and our control
on the tails of the absolutely convergent series therein.
\end{proof}

\appendix

\section{Proof of  Propositions ~\ref{cheat'} and ~\ref{replaces4.10}}
\label{theproof}

\begin{proof}[Proof of Proposition ~\ref{cheat'}]
The proof  for the first derivative
is similar to that of the proof of \cite[Lemma 2.3, Lemma 5.1]{LSV}.
We concentrate on the statement for $\NN_\alpha$, the proof for
$\LL_\alpha$ follows easily since $(\LL_\alpha-\NN_\alpha)(\varphi)(x)=
\varphi((x+1)/2)/2$.
Let $0\le \alpha <1$.
We have 
$T_\alpha'(x)= 1+ 2^ \alpha(\alpha+1) x^\alpha\ge 1$,
 $T_\alpha''(x)= 2^\alpha (\alpha+1)\alpha x^{\alpha-1} \ge 0$, and
 $T_\alpha'''(x)= 2^\alpha (\alpha+1)\alpha  (\alpha-1) x^{\alpha-2} \le 0$. 
    Throughout this proof, we set (recall \eqref{invv}) $
y=g_\alpha(x)
$.

For  $\varphi$ as in the statement of the proposition, we have (both terms are positive)
 \begin{align*}
- (\NN_\alpha \varphi)'(x)&=
 \frac{T_\alpha''(y) }{(T_\alpha'(y))^3} \varphi(y)- \frac{1}{(T_\alpha'(y))^2} \varphi'(y) \\
 &\le \biggl (\frac{T''_\alpha(y) }{(T_\alpha'(y))^2} + \frac{b_1}{y(T_\alpha'(y))}\biggr )
 \frac{\varphi(y)}{T_\alpha'(y)}\\
& \le \frac{b_1}{x}(\NN_\alpha \varphi)(x)\sup_{y \in [0,1/2]}
\biggl [\frac{ T_\alpha(y)}{b_1}
\cdot \biggl (\frac{ T''_\alpha(y)}{(T_\alpha'(y))^2} + 
 \frac{b_1}{y T_\alpha'(y)}\biggr )\biggr ] \, .
 \end{align*}
 Let $\Omega_1(y)$ be the term in square brackets, then we find if $b_1\ge 1+\alpha$
 \begin{align}
\nonumber \Omega_1(y)&=
\frac{T_\alpha(y)}{ b_1 y T'_\alpha(y)}\biggl (
\frac{yT''_\alpha(y)} {T'_\alpha(y)}+b_1\biggr )
\le \frac{ 1+2^\alpha y^\alpha}{1+2^\alpha (1+\alpha)y^\alpha}
\cdot \biggl (\frac{1}{b_1}\frac{  2^\alpha (\alpha+1)\alpha y^{\alpha}}{1+2^\alpha (1+\alpha)y^\alpha } + 1\biggr )\\
\nonumber & =
\biggl (1- \frac{ 2^\alpha \alpha y^\alpha}{1+2^\alpha (1+\alpha)y^\alpha}\biggr ) 
\cdot 
\biggl (1+\frac{1}{b_1}\frac{  2^\alpha (\alpha+1)\alpha y^{\alpha}}{1+2^\alpha (1+\alpha)y^\alpha } \biggr )\\
\label{controlb1} &\le \biggl (1- \frac{ 2^\alpha \alpha y^\alpha}{1+2^\alpha (1+\alpha )y^\alpha}\biggr ) 
\cdot 
\biggl (1+\frac{  2^\alpha \alpha y^{\alpha}}{1+2^\alpha (1+\alpha)y^\alpha } \biggr )\, ,
 \end{align}
 which is $\le 1$ for all $y \in [0,1/2]$  (we used
$(\alpha+1)/b_1\le 1$ in the last line).
 Note that if $\alpha=0$ then
 $\Omega_1(y)=\frac{ 2}{b_1}
 \frac{b_1}{2  }\equiv 1
 $.
To get the reverse inequality,  observe that
$\overline \Omega_1(y)=
\frac{T_\alpha(y)}{  y T'_\alpha(y)}\bigl (
\frac{yT''_\alpha(y)} {\bar b_1 T'_\alpha(y)}+ 1\bigr ) \ge 1
$
if $\bar b_1>0$ is small enough (just revisit \eqref{controlb1}).
  
Next, writing $T$ instead of $T_\alpha$ for simplicity
$|(\NN_\alpha \varphi)''(x)|$ is bounded by  (all  terms below are nonnegative)
\begin{align*}
&
 -3 \frac{\varphi'(y)T''(y)}{(T'(y))^4}-
\frac{\varphi (y)T'''(y)}{(T'(y))^4}
+3\frac{\varphi(y)(T''(y))^2}{(T'(y))^5}
+ \frac{\varphi''(y)}{(T'(y))^3}
\\
&\qquad \le \NN_\alpha\varphi(x)\bigg( 
3 \left(\frac{b_1}y\right)  \frac{T''(y)}{(T'(y))^3}-\frac{T'''(y)}{(T'(y))^3}
+3\frac{(T''(y))^2}{T'(y))^4}+ \left(\frac{b_2}{y^2}\right)\frac1{(T'(y))^2}\bigg)\\
&\qquad\le \frac{b_2}{x^2}\NN_\alpha\varphi(x)
\bigg[\frac{T(y)^2}{b_2|T'(y)|^2}\bigg( 3\left(\frac{b_1}y\right) \frac{2^\alpha(\alpha+1)\alpha  y^{\alpha-1}}{T'(y)}
+\frac{2^\alpha (\alpha+1)\alpha|\alpha-1|y^{\alpha-2}}{T'(y)}
\\
&\qquad\qquad\qquad\qquad \qquad\qquad\qquad\qquad\qquad\qquad\qquad
+3
\frac{(2 ^\alpha (\alpha+1)\alpha y^{\alpha-1})^2}{(T'(y))^2}
+\left(\frac{b_2}{y^2}\right)\bigg)\bigg]\, .
\end{align*}
The term $\Omega_2(y)$ in square brackets can be written
\begin{align*}
&\frac{T(y)^2}{y^2|T'(y)|^2}\bigg(  1 
+ \frac{2^\alpha \alpha y^\alpha}{(1+2^\alpha(\alpha+1)y^\alpha)}
\frac{1}{b_2}
\biggl [3 b_1  (\alpha+1)
+ (1-\alpha^2)
+3
\frac{2 ^\alpha (\alpha+1)^2\alpha y^{\alpha}}{1+2^\alpha(\alpha+1)y^\alpha}\biggr ]\bigg) \, .
\end{align*}
We can fix $b_2>3b_1(1+\alpha)+20$ large enough so that  $\Omega_2(y)\le 1$ for all  $y \in[0,1/2]$
because
\begin{align*}
& \left(\frac{T(y)}{y T'(y)}\right)^2= \left(\frac{1+ 2^\alpha y^{\alpha}}{1+ 2^\alpha (\alpha+1)y^\alpha}\right)^2 =
\left(
1- \frac{2^\alpha \alpha y^{\alpha}}{1+2^\alpha (\alpha+1) y^\alpha}\right)^2\, .
\end{align*}
 (Note that if $\alpha=0$ then
 $\Omega_2(y)\equiv 1 $.)
For the reverse inequality,  observe that
\begin{align*}
\overline \Omega_2(y):=&
\left(
1- \frac{2^\alpha \alpha y^{\alpha}}{1+2^\alpha (\alpha+1) y^\alpha}\right)^2\\
&\cdot 
\bigg(  1 
+ \frac{2^\alpha \alpha y^\alpha}{(1+2^\alpha(\alpha+1)y^\alpha)}
\frac{1}{\bar b_2}
\biggl [3 \bar b_1  (\alpha+1)
+ (1-\alpha^2)
+3
\frac{2 ^\alpha (\alpha+1)^2\alpha y^{\alpha}}
{1+2^\alpha(\alpha+1)y^\alpha}\biggr ]\bigg)\ge 1 \, ,
\end{align*}
if  $\bar b_1/\bar b_2>0$ is large
enough.
  \end{proof}

\begin{proof}[Proof of Proposition~\ref{replaces4.10}]
Note that $m(\varphi)=m(\LL_\alpha(\varphi))$ and $\LL_\alpha(\rho_\alpha)=\rho_\alpha$.
Also, $m(\varphi) \le 2 m(\NN_\alpha(\varphi))$ (using our assumption)
and  $\NN_\alpha(\rho_\alpha) < \rho_\alpha$.
By \eqref{controlb1} 
 we have $\LL_\alpha(\CC_{*,1}(\alpha))\subset \CC_{*,1}(\alpha, a , b_1)$
and  $\NN_\alpha(\CC_{*,1}(\alpha))\subset \CC_{*,1}(\alpha, 2a, b_1)$.

For the decay claim, 
use that $\CC_*(0)\subset \CC_*(\beta)$ for any $\beta \in (0,1)$, and
fix $\beta=\alpha$. 
For $\epsilon>0$ we set
\footnote{Identifying $[0,1]$ with the circle $\SS^1$.}  $\AAA_\epsilon \varphi(x)=(2\epsilon)^{-1}
\int_{y \in \SS^1: |x-y|<\epsilon} \varphi(y)\, dy$. Then, revisiting 
\cite[Proposition 3.3]{LSV} for $\alpha=0$, we see that we can take $n_\epsilon$ there to be
$|\log \epsilon|/\log 2$.
Since \cite[Lemma 3.1]{LSV}  implies $
\|\LL_0^{n_\epsilon}(\id-\AAA_\epsilon)(\varphi)\|_1\le \frac{18 a b_1 c_2}{\beta(1-\beta)}\|\varphi\|_1$
if $\varphi \in \CC_{*,1}(\beta,b_1)$,  and since
$\CC_*(\alpha)$ is invariant under $\LL_0$, the
 first paragraph of the
proof of \cite[Thm~ 4.1]{LSV}, taking
$\epsilon=n^{-1/\alpha}$ gives \eqref{toremember}.
(Note that \cite[Lemma 2.4]{LSV} is not needed 
when invoking  \cite[Prop 3.3]{LSV} in the proof of 
\cite[Thm 4.1]{LSV} for $T_0$, since we may obtain an easier lower bound.)
\end{proof}


\section{A cone-only proof for $\alpha \in [0,1/2)$ and $\psi\in L^\infty$}
\label{coneonly}

We show how to modify the proof of Theorem ~\ref{main} to bypass the use
of Gou\"ezel's results \cite{Go, Goth} when $\alpha <1/2$ 
and $\psi\in L^\infty$ (exploiting only \cite{LSV}).

We first note that in the setting of
Proposition~\ref{replaces4.10} there exists $C_\alpha=C_\alpha(a,b_1)>0$, with
$\sup_{\beta \in[ \alpha,(1+\alpha)/2]} C_\beta(a,b_1)<\infty$, so that
for any  $\psi \in L^\infty$ and $\varphi\in \CC_{*,1}(\alpha)+ \real$ with
$\int \varphi \, dx=0$, 
\begin{equation}\label{toremember1}
\int_0^1 \psi \LL_\alpha^k(\varphi)\, dx \le 
C_\alpha \|\psi\|_{L^\infty}  \| \varphi\|_{L^1}\frac{(\log k)^{1/\alpha}}
{k^{\frac{1}{\alpha}-1} }\, , \, \,
\forall k \ge 1 \, ,
\end{equation}
Indeed,  note that \cite[Lemma 3.1]{LSV} applies to
$\CC_{*,1}$ instead of $\CC_*$, up to replacing $10a$ there by $18 a b_1 c_2$.
(In the computation, 
just use that $|\varphi(x)-\varphi(y)|\le \sup_{z\in [x,y]}
|\varphi'(z)|\epsilon\le 2ab_1c_2\epsilon x^{-1-\alpha}$, 
if $|x-y|\le \epsilon$ with $x\le y$.  The original $10$ in \cite{LSV} is obtained as $4\times 2+2$: 
the definition of our cone $\CC_{*,1}$ incorporates an additional factor of $2$, to make $4\times 2\times 2+2=18$ as well as introducing extra factor of $c_2$, as in \eqref{Cstar1_basic}, while the $b_1$ appears since we use the derivative of $\phi$, as just noted.
Finally, apply the argument \footnote{There is a typo there and one should take in fact $\epsilon=
n^{-1/\alpha}( 2^{2+1/\alpha} \gamma^{-1} (\frac{1}{\alpha}-1) \log n)^{1/\alpha}$.}
in the first paragraph of 
\footnote{Just like for \cite[Prop. 5.4]{LSV}.}
the proof of \cite[Thm 4.1]{LSV}.
The proof gives
$
C_\alpha= \frac{36  a b_1c_2}{\alpha(1-\alpha)} 
2^{(2+1/\alpha)(\frac{1}{\alpha}-1)}
\left ( \frac{\frac{1}{\alpha}-1}{\gamma}\right )^{1/\alpha}$, for some small $\gamma>0$.
In particular, $C_\alpha$ becomes very large as $\alpha\to 0$.
This ends the proof of \eqref{toremember1}.

\smallskip

We need to introduce the following cone:
\begin{align*}
\CC_3=\Bigg\{ \varphi \in \CC^3(0,1]\, \mid \, 
&  \varphi \in \CC_2\, ,\, \,\, 
 |\varphi'''(x)|\le \frac{b_3}{x^3}  \varphi(x)\, ,\,  \,\, \forall x \in (0,1] \Bigg\}\, .
\end{align*}
If $b_3\ge b_1$ is large enough then the invariance statements of Proposition~\ref{cheat'} also hold for $\CC_3$, indeed, noting that
 $T_\alpha^{(iv)}(x)= 2^\alpha (\alpha+1)\alpha  (\alpha-1)(\alpha-2) x^{\alpha-3}\ge 0$, we have
 \begin{align*}
&|(\NN_\alpha \varphi)'''(x)| 
\le \frac{|\varphi'''(y)|}{|T'(y)|^4}
+6\frac{|\varphi''(y)T''(y)|}{|T'(y)|^5}
+4\frac{|\varphi'(y)T'''(y)|}{|T'(y)|^5}
+15\frac{|\varphi'(y)(T''(y))^2|}{|T'(y)|^6}\\
&\qquad+\frac{|\varphi(y)T^{(iv)}(y)|}{|T'(y)|^5}
+4\frac{|\varphi(y)T'''(y)|}{|T'(y)|^6}
+6\frac{|\varphi(y)T''(y) T'''(y)|}{|T'(y)|^6}
+15\frac{|\varphi(y)(T''(y))^3|}{|T'(y)|^7} \, , \\
\end{align*}
and this is bounded by
\begin{align*}
& \NN_\alpha\varphi(x)\bigg(
\frac{b_3}{y^3} \frac1{|T'(y)|^3}
+\frac{6 b_2}{y^2}\frac1{|T'(y)|^4}
+ \frac{b_1}y
\biggl (4\frac{|T'''(y)|}{|T'(y)|^4}+ 15 \frac{|(T''(y))^2|}{|T'(y)|^5}\biggr )
\\
&\qquad\qquad\qquad+\frac{|T^{(iv)}(y)|}{|T'(y)|^4}
+4\frac{|T'''(y)|^2}{|T'(y)|^5}+ 6 \frac{|T''(y) T'''(y)|}{|T'(y)|^5}+ 15
\frac{|(T''(y))^3|}{|T'(y)|^6}\bigg)\\
&\quad \le \frac{b_3}{x^3}\NN_\alpha\varphi(x)\bigg[
\frac{T(y)^3}{ |T'(y)|^3 y^3}
\bigg (
1
+\frac{6 b_2 y}{b_3|T'(y)|}
+ \frac{b_1 y^2}{b_3}
\biggl (4\frac{|T'''(y)|}{|T'(y)|}+ 15 \frac{|(T''(y))^2|}{|T'(y)|^2}\biggr )
\\
&\qquad\qquad\qquad+
\frac{y^3}{b_3}\biggl (\frac{|T^{(iv)}(y)|}{|T'(y)|}
+4\frac{|T'''(y)|^2}{|T'(y)|^2}+ 6 \frac{|T''(y) T'''(y)|}{|T'(y)|^2}+ 15
\frac{|(T''(y))^3|}{|T'(y)|^3}\biggr )
\bigg ) \bigg]\, .
\end{align*}
The term $\Omega_3(y)$ in square brackets is  $\le 1$
for all $y\in[0,1/2]$ if $b_3$ is large enough because
\begin{align*}
& \left(\frac{T(y)}{y T'(y)}\right)^3= \left(\frac{1+ 2^\alpha y^{\alpha}}{1+ 2^\alpha (\alpha+1)y^\alpha}\right)^3=
\left(
1- \frac{2^\alpha \alpha y^{\alpha}}{1+2^\alpha (\alpha+1) y^\alpha}\right)^3\, .
\end{align*}

It is easy to see that $\rho_\alpha \in \CC_3$,
that $\LL_\alpha {\bf 1}\in \CC_{*,1}(\alpha,a,b_1) \cap \CC_3$, etc. In fact, each
occurrence of $\CC_2$ in the proof of Theorem~\ref{main} can be replaced by $\CC_3$.

We now go over the changes in the proof of Theorem~\ref{main}. Consider first Step~0:
If $\alpha \in (0,1/2)$ then,  using \eqref{useful}, \eqref{Xb}, \eqref{X'b},  and \eqref{X''b}, together
with \eqref{half} and \eqref{half'},
and the fact that $|(\NN_\alpha(\rho_\alpha))''|\le  ac_2 b_2 x^{-2-\alpha}$,
it is easy to see that  there exists $a>0$ and a
uniformly bounded constant $C_X>0$  
 so that $-(X_\alpha' \NN_\alpha( \rho_\alpha))'+C_X$  belongs
 to $\CC_{*,1}(\alpha, a,b_1)$, up to increasing $a$ and $b_1$
(see \eqref{Cstar1}).
Next \eqref{toremember1}   applied to
the zero-average function
 $\varphi=-(X_\alpha \NN_\alpha( \rho_\alpha))'
\in \CC_{*,1}+\real$ gives $C_\alpha>0$
so that
the $j$th term in the right-hand side of \eqref{parts}
is bounded by 
$$C_\alpha j^{1-1/\alpha}(\log j) ^{1/\alpha} 
\|\psi\|_{L^\infty}  \| (X_\alpha \NN_\alpha(\rho_\alpha))' \|_1\, .
$$
Since  $\alpha <1/2$, this is summable.

In Step~1, 
proceeding as in Step~ 0 in \S \ref{theprooft} (using \eqref{Cstarii} to
invoke \eqref{half} in order to get \eqref{half'}), we  find  for any $1\le j \le k-1$
 a real constant $C_{j,k}<\infty$ so that
\begin{equation}\label{coneclaim}
\{\pm X_\alpha' \NN_{\alpha}( \LL_\alpha^{k-j-1}({\bf 1}))+C_{j,k}\, , \,\, 
X_\alpha [\NN_{\alpha}( \LL_\alpha^{k-j-1}({\bf 1}))]' \}\subset
\CC_{1,*}(\alpha,2 a,B_1)\ \, .
\end{equation}
Indeed, to show \eqref{coneclaim}, setting $\varphi= \LL_\alpha^{k-j-1}({\bf 1}))\in \CC_*$,
and noting that $\varphi\ge 0$ and $\varphi'\le 0$, so that
$\NN_\alpha \varphi \ge 0$ and $(\NN_\alpha \varphi)'\le 0$ so that $X_\alpha (\NN_\alpha \varphi)'\ge 0$
it is enough to check that
$$
|X''_\alpha \NN_\alpha(\varphi)|(x)+| X_\alpha' (\NN_\alpha (\varphi))' |(x) \le \frac{B_1}{2x}
|X'_\alpha(x)| \NN_\alpha(\varphi)(x)\,  , 
$$
(which follows from $-\NN_\alpha(\varphi)' (x)\le b_1 \NN_\alpha(\varphi)(x)/x$
and   \eqref{X'b}, \eqref{X''b}),
and
$$
|  X_\alpha' (\NN_\alpha (\varphi))' + X_\alpha(\NN_\alpha(\varphi))''|(x) \le 
\frac{B_1}{2x}
X_\alpha(x)( \NN_\alpha(\varphi))'(x) \, , 
$$
(which follows from $\NN_\alpha(\varphi)'' (x)\le b_2\NN_\alpha(\varphi)(x)/x^2\le -
b_2 \NN_\alpha(\varphi)'(x)/(\bar b_1 x)$
and  \eqref{Xb}, \eqref{X'b}).

Next, if
$0<\alpha<\beta <1/2$,
by using 
$\CC_{1,*}(\alpha)\subset \CC_{1,*}(\beta)$ and  \eqref{toremember1},  we get summability of the first term of the
expression in the right-hand side of \eqref{calc0} as $k\to \infty$: 
$$
\left|\sum_{j=0}^{k-1} \int_0^1 \psi \LL_\beta^{j} ([X_\alpha \NN_{\alpha}( \LL_\alpha^{k-j-1}({\bf 1}))] ')\, dx
\right| \le 
C_\beta \|\psi\|_{L^\infty} \sum_{j=0}^{k-1} \frac{(\log j)^{1/\beta}}{j^{-1+1/\beta}}  \, .
$$

When we consider the second
term of the right-hand side 
of \eqref{calc0}, we require the derivatives
of order three in $\CC_3$.  Applying \eqref{magic}
to $\varphi=\LL_\alpha^{k-j-1}(\mathbf{1})\in \CC_3\cap \CC_{*,1}(\alpha)$, and using Proposition ~\ref{cheat'},
we see  that for any $\alpha \le \gamma\le \beta$, up to taking larger $a$ and $b_1$ (uniformly
 in $1\le j \le k-1$) the decomposition \eqref{seven} of
$
\partial^2_{\gamma} \LL_{\gamma}( \LL_\alpha^{k-j-1}({\bf 1}))$
gives seven functions which, up to multiplying by $-1$ and
adding a uniformly bounded constant, all lie  in 
$\CC_{*,1}(\alpha, a, b_1)
\subset \CC_{*,1}(\beta)$. We proceed
as in the proof of \eqref{coneclaim}, developing
the Leibniz inequality. We shall focus on
the contribution of $X_\alpha^2 (\NN_\alpha(\varphi))''$, leaving the
other terms to the reader. We need to check that
$$
|2 X_\alpha X_ \alpha' (\NN_\alpha(\varphi))''
+ X_\alpha^2 (\NN_\alpha(\varphi))'''|
\le \frac{B_1}{7x} |X_\alpha^2 (\NN_\alpha(\varphi))''|\, .
$$
The above bound follows from 
$(\NN_\alpha(\varphi))'''(x)\le b_3 \NN_\alpha(\varphi)(x)/x^3
\le  b_3 (\NN_\alpha(\varphi))''(x)/(\bar b_2 x)$
and \eqref{Xb}, \eqref{X'b}.
 Since we are in a cone, we may apply \eqref{toremember1}
once more, we thus get the  bound
\begin{equation}\label{better'}
\frac{\|\psi\|_{L^\infty}}{\beta- \alpha}
\int_{\alpha}^{\beta} (\gamma-\alpha) C_\beta
\sum_{j=0}^{k-1} \frac{(\log j)^{1/\beta}}{j^{-1+1/\beta}} \, d\gamma\le C_\beta \|\psi\|_{L^\infty} (\beta -\alpha)\, .
\end{equation}

Step~2 does not change, and the proof of Theorem~\ref{main} bypassing \cite{Go, Goth}
is complete.

\end{document}